\documentclass[12pt, twoside, fleqn, a4paper]{article}

\usepackage{latexsym}  
\usepackage{amscd}    
\usepackage{theorem}  
\usepackage{pifont}  
\usepackage{mathbbol} 
\usepackage{amsfonts}  
\usepackage{xspace}  
\usepackage{amssymb} 
\usepackage{fancyhdr} 
\usepackage{mathrsfs} 
\usepackage{amsmath} 
\usepackage{graphics} 
\usepackage{graphicx} 
\usepackage{euscript}
\usepackage{psfrag}
\usepackage{empheq} 
\usepackage{mathabx} 
\usepackage[parfill]{parskip}     
\usepackage[colorlinks=true, allcolors=blue]{hyperref}

\title{A variational principle for holomorphic correspondences}
\author{Subith Gopinathan\footnote{Indian Institute of Science Education and Research Thiruvananthapuram (IISER-TVM), Maruthamala P.O., Vithura, Kerala, India. PIN 695 551.\ \ email: \texttt{subith21@iisertvm.ac.in}}\ \ and\ \ Shrihari Sridharan\footnote{Indian Institute of Science Education and Research Thiruvananthapuram (IISER-TVM), Maruthamala P.O., Vithura, Kerala, India. PIN 695 551.\ \ email: \texttt{shrihari@iisertvm.ac.in} (Corresponding Author). \\ The second named author thanks the support provided by NBHM through a research grant \\ No. 02011/35/2025/NBHM(R.P)/R\&D II/9832}} 

\DeclareFontFamily{OT1}{pzc}{}
\DeclareFontShape{OT1}{pzc}{m}{it}%
              {<-> s * [0.900] pzcmi7t}{}
\DeclareMathAlphabet{\mathpzc}{OT1}{pzc}%
                                 {m}{it}

{\theorembodyfont{\slshape} \newtheorem{theorem}{Theorem}[section]}
{\theorembodyfont{\slshape} \newtheorem{definition}[theorem]{Definition}}
{\theorembodyfont{\slshape} \newtheorem{lemma}[theorem]{Lemma}}
{\theorembodyfont{\slshape} }
{\theorembodyfont{\slshape} }
{\theorembodyfont{\slshape} \newtheorem{corollary}[theorem]{Corollary}} 
{\theorembodyfont{\slshape} } 
{\theorembodyfont{\slshape} }
{\theorembodyfont{\slshape} }

\numberwithin{equation}{section}

\newenvironment{proof}{\paragraph{Proof:}}{\hfill$\bullet$}

\date{March 04, 2026}

\begin{document}

\maketitle 

\begin{abstract} 
In this paper, we consider a dynamical system on the Riemann sphere that evolves through a set-valued map, namely a holomorphic correspondence. Analogous to the investigation of the dynamics effected by a continuous map defined on a compact metric space, wherein the concept of measure-theoretic entropy of the map and its utility in defining the pressure of a function are well-studied, we define the measure-theoretic entropy of a holomorphic correspondence and use the same to define the pressure of continuous functions. These ideas naturally lead to the formulation of a variational principle in the context of the dynamics of a holomorphic correspondence. 
\end{abstract}

\begin{tabular}{l l} 
{\bf Keywords} : & Holomorphic correspondence, \\
& Measure theoretic entropy, \\
& Variational principle, \\
& Ruelle operator. \\ 
& \\ 
{\bf MSC Subject} & 37D35, 37A35, 37F05. \\ 
{\bf Classifications} & \\ 
\end{tabular} 
\bigskip 

\newpage 

\section{Introduction}

The variational principle for maps, see \cite{Pw:1975}, states thus: For a continuous map $T$ defined on a compact metric space $X$, the pressure functional for any function $f$ in the Banach space of continuous real-valued functions defined on $X$, with respect to $T$, is given by 
\begin{equation} 
\label{eqn:varprT} 
{\rm Pr}_{T} (f) = \sup_{m\, \in\, \mathscr{M}_{T} (X)} \left\{ h_{T} (m) + \int_{X} f \mathrm{d}m \right\}, 
\end{equation} 
where $\mathscr{M}_{T} (X)$ is the space of all $T$-invariant probability measures supported on $X$ and $h_{T} (m)$ is the measure-theoretic entropy of the measure $m$ corresponding to the map $T$. 

In this paper, we are interested in expression of the pressure functional, analogous to the one stated in Equation \eqref{eqn:varprT} for any function $f$ in the Banach space of continuous real-valued functions defined on the Riemann sphere (denoted by $\widehat{\mathbb{C}}$), nevertheless, with respect to a holomorphic correspondence, say $\Gamma$ that we now define. 

\begin{definition} 
\label{holo}
A holomorphic correspondence $\Gamma \subseteq \widehat{\mathbb{C}} \times \widehat{\mathbb{C}}$ is given by a formal linear combination of the form $\displaystyle{\Gamma = \sum_{t\, =\, 1}^{M} \Gamma_{t}}$, where $\Gamma_{t}$’s are irreducible complex-analytic subvarieties, repeated according to their multiplicities that satisfy the following conditions: Suppose $\pi_{i}$ denotes the projection onto the $i$-th coordinate for $i = 1, 2$, then 
\begin{enumerate} 
\item $\pi_{i} \vert_{\Gamma_{t}}$ is surjective for each $i = 1, 2$ and for every $1 \le t \le M$ and 
\item the sets $\pi_{1} \left( \pi_{2}^{-1} \left( \left\{ x_{0} \right\} \right) \cap \Gamma_{t} \right)$ and $\pi_{2} \left( \pi_{1}^{-1} \left( \left\{ x_{0} \right\} \right) \cap \Gamma_{t} \right)$ are of finite cardinality for any generic point $x_{0} \in \widehat{\mathbb{C}}$ and for every $1 \le t \le M$. 
\end{enumerate} 
\end{definition} 

Throughout this paper, we use the term ``set", as in the second condition of Definition \ref{holo}, to represent the collection of points in $\widehat{\mathbb{C}}$ so defined, counted with multiplicities. Suppose for some $x_{0} \in \widehat{\mathbb{C}}$, we denote $d_{{\rm top}} \left( x_{0} \right) = \# \left( \pi_{1} \left( \pi_{2}^{-1} \left( \left\{ x_{0} \right\} \right)\cap \Gamma_{t} \right) \right)$. We know that there exists a non-empty Zariski open set, say $U \subseteq \widehat{\mathbb{C}}$ such that, for $1 \le t \le M$, defining $U^{t} := \pi_{2}^{-1} (U) \cap \Gamma_{t}$, the map $\left. \pi_{2} \right|_{U_{t}}$ provides a $\delta_{t}$-sheeted holomorphic covering, where $\delta_{t} = \# \left\{ y \in \widehat{\mathbb{C}}\ :\ (y,\, x_{0}) \in \Gamma_{t} \right\}$ for any $x_{0} \in U$. Hence, $d_{{\rm top}} = \sum\limits_{t\, =\, 1}^{M} \delta_{t}$, for any $x_{0} \in U$. Similarly, one defines $d_{{\rm fwd}}$ for the holomorphic correspondence $\Gamma$ by considering the cardinality of the set $\pi_{2} \left( \pi_{1}^{-1} \left( \left\{ x_{0} \right\} \right)\cap \Gamma_{t} \right)$, defining the quantity $\lambda_{t} = \# \left\{ y \in \widehat{\mathbb{C}}\ :\ (x_{0},\, y) \in \Gamma_{t} \right\}$ for $x_{0} \in \widehat{\mathbb{C}}$ and obtaining $d_{{\rm fwd}} = \sum\limits_{t\, =\, 1}^{M} \lambda_{t}$. We urge the readers to note that the case where $d_{{\rm top}} = d_{{\rm fwd}} = 1$ for all points $x_{0} \in \widehat{\mathbb{C}}$ reduces $\Gamma$ to the graph of a M\"{o}bius map, while $d_{{\rm top}} \ge 2$ and $d_{{\rm fwd}} = 1$ is the study of rational maps of degree $d_{{\rm top}}$. 

For any $f \in \mathcal{C} \left( \widehat{\mathbb{C}}, \mathbb{R} \right)$, we aim to define the pressure functional of $f$ with respect to the holomorphic correspondence $\Gamma$ as 
\[ {\rm Pr}_{\Gamma} (f) = \sup_{m\, \in\, S_{\Gamma}\, \subseteq\, \mathscr{M} \left( \widehat{\mathbb{C}} \right)} \left\{ \gamma (m) + \int_{\widehat{\mathbb{C}}} f \mathrm{d}m \right\}, \] 
for an appropriate set $S_{\Gamma}$ of measures dependent on the correspondence that is a subset of the space of all probability measures supported on the Riemann sphere, denoted by $\mathscr{M} \left( \widehat{\mathbb{C}} \right)$. Also, one needs to identify the function $\gamma : \mathscr{M} \left( \widehat{\mathbb{C}} \right) \longrightarrow \mathbb{R}$, that can act analogous to the measure-theoretic entropy in the case of maps. 

The study of thermodynamical properties of holomorphic correspondences has gained significant attention in recent times. In \cite{KS:2022}, the authors introduced the notion of a metric entropy for an invariant measure associated to the set-valued map, say $F : X \longrightarrow 2^{X}$, where $2^{X}$ denotes the collection of all subsets of $X$. Further, they proved that the supremum of all these metric entropies is atmost the topological entropy of $F$, proving what they call the \emph{half variational principle}. In \cite{LLZ:2024}, the authors prove a variational principle for a certain class of correspondences by considering the space of continuous functions defined on the graph of the correspondence satisfying certain regularity conditions and using what they call \emph{transition probability kernels}. In \cite{ds:2008}, the authors introduced the notion of entropy of holomorphic correspondences using the idea of separated sets and obtain a fair estimate for the upper bound for the entropy of holomorphic correspondences. In \cite{bs:2021}, the authors make use of this upper bound and obtain a class of holomorphic correspondences generated by a rational semi-group for which the upper bound is achieved. In \cite{ss:2025}, the authors defined the entropy of holomorphic correspondences using spanning sets. Further, the authors also introduced pressure of continuous functions in relation to the dynamical systems evolving through holomorphic correspondences. Moreover, they also developed a variational principle using measures supported in space of orbits and explained certain convergence results among the spectral properties of an appropriately defined Ruelle operator. Extending these ideas, we aim to identify a suitable class of measures supported on the Riemann sphere, that in turn will help us in developing an alternative approach towards defining the pressure function. 

The paper is organised thus: In Section \ref{sec:not}, we introduce the main character of the movie, namely holomorphic correspondences defined on the Riemann sphere, fix a few notations and explain the dynamical evolution, that we are interested in. In Section \ref{sec:pfm}, we introduce the push-forwards of measures and use the same effectively in Section \ref{sec:intent} to define the intermediate entropy of a holomorphic correspondence with respect to an ordered pair of measures, carefully chosen and arrive at a result that showcases the relationship between the intermediate entropy of a holomorphic correspondence with respect to an ordered pair of measures and the entropy of an appropriately defined shift map with respect to some invariant measure. In Section \ref{sec:mte}, we define the measure-theoretic entropy of a holomorphic correspondence and state and prove the variational principle, in the current context. Finally, in Section \ref{sec:ruop}, we restrict the dynamics of the holomorphic correspondence on the support of its Dinh-Sibony measure (defined therein), define the Ruelle operator pertaining to this restricted dynamics and prove the existence of a unique measure that satisfies certain convergence properties. 

\section{Some notations} 
\label{sec:not}

We begin this section by defining the set of all permissible paths of iteration of arbitrary length for the holomorphic correspondence $\Gamma$, as written in Definition \ref{holo}. Given $n \in \mathbb{Z}_{+}$, we denote the set of all permissible forward paths of iteration of length $n$ by 
\begin{eqnarray*} 
\mathscr{P}_{n}^{\Gamma} \left( \widehat{\mathbb{C}} \right) & = & \Bigg\{ \left( x_{0}, x^{(1)}_{j_{1}}, \cdots, x^{(n)}_{j_{n}};\; \alpha_{1}, \cdots, \alpha_{n} \right)\ :\ \left( x^{(r - 1)}_{j_{r - 1}}, x^{(r)}_{j_{r}} \right) \in \Gamma_{\alpha_{r}}\ \text{where}\ x^{(0)}_{j_{0}} = x_{0} \in \widehat{\mathbb{C}}, \\ 
& & \hspace{+3cm} \alpha_{r} \in \left\{ 1, 2, \cdots, M \right\} \text{and}\ 1 \le j_{r} \le \lambda_{\alpha_{r}} \left( x^{(r - 1)}_{j_{r - 1}} \right)\ \text{for}\ 1 \le r \le n \Bigg\}, 
\end{eqnarray*} 
where $\lambda_{\alpha_{r}} \left( x^{(r - 1)}_{j_{r - 1}} \right) = \# \left\{ \left( x^{(r - 1)}_{j_{r - 1}}, x^{(r)}_{j_{r}} \right) \in \Gamma_{\alpha_{r}} \right\}$. We denote an arbitrary point in $\mathscr{P}_{n}^{\Gamma} \left( \widehat{\mathbb{C}} \right)$ by $\mathfrak{X}_{n}^{+} \left( x_{0}; \boldsymbol{\alpha} \right)_{\boldsymbol{j}}$ where $x_{0} \in \widehat{\mathbb{C}},\ \boldsymbol{\alpha} = \left( \alpha_{1}, \cdots, \alpha_{n} \right)$ and $\boldsymbol{j} = \left( j_{1}, \cdots, j_{n} \right)$. We define projection maps $\Pi_{\left(r,\, n\right)}^{+} : \mathscr{P}_{n}^{\Gamma} \left( \widehat{\mathbb{C}} \right) \longrightarrow \widehat{\mathbb{C}}$ for any $0 \le r \le n$, and ${\rm Proj}_{\left(r,\, n\right)}^{+} : \mathscr{P}_{n}^{\Gamma} \left( \widehat{\mathbb{C}} \right) \longrightarrow \left\{ 1, 2, \cdots, M \right\}$ for $0 < r \le n$ given by 
\[ \Pi_{\left(r,\, n\right)}^{+} \left( \mathfrak{X}_{n}^{+} \left( x_{0}; \boldsymbol{\alpha} \right)_{\boldsymbol{j}} \right)\ =\ x^{(r)}_{j_{r}}\ \ \text{and}\ \ {\rm Proj}_{\left(r,\, n\right)}^{+} \left( \mathfrak{X}_{n}^{+} \left( x_{0}; \boldsymbol{\alpha} \right)_{\boldsymbol{j}} \right)\ =\ \alpha_{r}. \] 
Endowing the product topology on the product space $\widehat{\mathbb{C}}^{n + 1} \times \left\{ 1, 2, \cdots, M \right\}^{n}$, one may note that the subspace $\mathscr{P}_{n}^{\Gamma} \left( \widehat{\mathbb{C}} \right)$ is compact, since $\widehat{\mathbb{C}}$ is compact and the topology compatible with the metric defined on $\left\{ 1, 2, \cdots, M \right\}^{n}$ should be inherited from the discrete Kronecker delta distance, that one defines on $\left\{ 1, 2, \cdots, M \right\}$. We now define the concept of separated and spanning subsets in $\mathscr{P}_{n}^{\Gamma} \left( \widehat{\mathbb{C}} \right)$, as one may obtain from \cite{ds:2008, ss:2025}. 

\begin{definition} 
Let $\Gamma$ be a holomorphic correspondence defined on $\widehat{\mathbb{C}}$, as written in Definition \ref{holo} and $\mathscr{P}_{n}^{\Gamma} \left( \widehat{\mathbb{C}} \right)$ be the collection of all permissible forward paths of iteration of length $n$ of $\Gamma$. 
\begin{enumerate} 
\item Given $\epsilon > 0$, we say that any two points $\mathfrak{X}_{n}^{+} \left( x_{0}; \boldsymbol{\alpha} \right)_{\boldsymbol{j}}$ and $\mathfrak{X}_{n}^{+} \left( y_{0}; \boldsymbol{\beta} \right)_{\boldsymbol{k}}$ in $\mathscr{P}_{n}^{\Gamma} \left( \widehat{\mathbb{C}} \right)$ are $(n, \epsilon)$-separated if 
\begin{eqnarray*} 
\text{either} & & d_{\widehat{\mathbb{C}}} \left( \Pi_{\left(r,\, n\right)}^{+} \left( \mathfrak{X}_{n}^{+} \left( x_{0}; \boldsymbol{\alpha} \right)_{\boldsymbol{j}} \right),\ \Pi_{\left(r,\, n\right)}^{+} \left( \mathfrak{X}_{n}^{+} \left( y_{0}; \boldsymbol{\beta} \right)_{\boldsymbol{k}} \right) \right)\ \ >\ \ \epsilon\ \ \ \text{for some}\ 0 \le r \le n \\ 
\text{or} & & {\rm Proj}_{\left(r,\, n\right)}^{+} \left( \mathfrak{X}_{n}^{+} \left( x_{0}; \boldsymbol{\alpha} \right)_{\boldsymbol{j}} \right)\ \ne\ {\rm Proj}_{\left(r,\, n\right)}^{+} \left( \mathfrak{X}_{n}^{+} \left( y_{0}; \boldsymbol{\beta} \right)_{\boldsymbol{k}} \right)\ \hspace{+1cm} \text{for some}\ 0 < r \le n. 
\end{eqnarray*} 
A subset $ \mathcal{F} \subseteq \mathscr{P}_{n}^{\Gamma} \left( \widehat{\mathbb{C}} \right)$ is said to be $(n, \epsilon)$-separated if any pair of distinct points in $\mathcal{F}$ are $(n, \epsilon)$-separated. 
\item Given $\epsilon > 0,\ \mathcal{G} \subseteq \mathscr{P}_{n}^{\Gamma} \left( \widehat{\mathbb{C}} \right)$ is said to be a $(n, \epsilon)$-spanning set of $\mathscr{P}_{n}^{\Gamma} \left( \widehat{\mathbb{C}} \right)$ if for every $\mathfrak{X}_{n}^{+} \left( x_{0}; \boldsymbol{\alpha} \right) \in \mathscr{P}_{n}^{\Gamma} \left( \widehat{\mathbb{C}} \right)$ there exists a point $y_{0} \in \widehat{\mathbb{C}}$ such that $\displaystyle{\mathfrak{X}_{n}^{+} \left( y_{0}; \boldsymbol{\alpha} \right) \in \mathcal{G}}$ satisfying 
\[ d_{\widehat{\mathbb{C}}} \left( \Pi_{\left(r,\, n\right)}^{+} \left( \mathfrak{X}_{n}^{+} \left( x_{0}; \boldsymbol{\alpha} \right) \right),\ \Pi_{\left(r,\, n\right)}^{+} \left( \mathfrak{X}_{n}^{+} \left( y_{0}; \boldsymbol{\alpha} \right) \right) \right)\ <\ \epsilon\ \ \ \text{for all}\ 0 \le r \le n. \] 
\end{enumerate} 
\end{definition} 

Making use of the concept of separated sets and spanning sets in $\mathscr{P}_{n}^{\Gamma} \left( \widehat{\mathbb{C}} \right)$, we now define the topological pressure of a real-valued continuous function, say $f \in \mathcal{C} \left( \widehat{\mathbb{C}}, \mathbb{R} \right)$, as written in \cite{ss:2025}. 

\begin{definition}\cite{ss:2025} 
\label{def:cgdpresent}
Let $\Gamma$ be a holomorphic correspondence defined on $\widehat{\mathbb{C}}$, as written in Definition \ref{holo} and $\mathscr{P}_{n}^{\Gamma} \left( \widehat{\mathbb{C}} \right)$ be the collection of all permissible $n$-orbits of $\Gamma$. Suppose $f \in \mathcal{C} \left( \widehat{\mathbb{C}}, \mathbb{R} \right)$. Then, the topological pressure of the function $f$ with respect to the holomorphic correspondence $\Gamma$, denoted by ${\rm Pr} (\Gamma, f)$ is given by 
\begin{eqnarray*} 
& & {\rm Pr} (\Gamma, f) \\ 
& = & \lim_{\epsilon\, \to\, 0}\; \left( \limsup_{n\, \to\, \infty}\; \frac{1}{n} \log \left( \sup\limits_{\begin{minipage}{3cm} \begin{center} \scriptsize{$\mathcal{F}\, \subseteq\, \mathscr{P}_{n}^{\Gamma} \left( \widehat{\mathbb{C}} \right)$} \\ \scriptsize{is an $(n, \epsilon)$} \\ \scriptsize{separated family} \end{center} \end{minipage}} \left\{ \sum_{\mathfrak{X}_{n}^{+} \left( x_{0}; \boldsymbol{\alpha} \right)_{\boldsymbol{j}}\, \in\, \mathcal{F}}\; \left( \prod_{p\, =\, 0}^{n - 1} e^{f \left( \Pi_{\left(p,\, n\right)}^{+} \left( \mathfrak{X}_{n}^{+} \left( x_{0}; \boldsymbol{\alpha} \right)_{\boldsymbol{j}} \right) \right)} \right) \right\} \right) \right) \\ 
& = & \lim_{\epsilon\, \to\, 0}\; \left( \limsup_{n\, \to\, \infty}\; \frac{1}{n} \log \left( \inf\limits_{\begin{minipage}{3cm} \begin{center} \scriptsize{$\mathcal{G}\, \subseteq\, \mathscr{P}_{n}^{\Gamma} \left( \widehat{\mathbb{C}} \right)$} \\ \scriptsize{is an $(n, \epsilon)$} \\ \scriptsize{spanning set} \end{center} \end{minipage}} \left\{ \sum_{\mathfrak{X}_{n}^{+} \left( x_{0}; \boldsymbol{\alpha} \right)_{\boldsymbol{j}}\, \in\, \mathcal{G}}\; \left( \prod_{p\, =\, 0}^{n - 1} e^{f \left( \Pi_{\left(p,\, n\right)}^{+} \left( \mathfrak{X}_{n}^{+} \left( x_{0}; \boldsymbol{\alpha} \right)_{\boldsymbol{j}} \right) \right)} \right) \right\} \right) \right)
\end{eqnarray*} 

and the topological entropy of the holomorphic correspondence is defined as 
\[ h_{{\rm top}} \left( \Gamma \right)\ \ =\ \ {\rm Pr} (\Gamma, 0). \]
\end{definition} 

As one may observe from the definition of topological pressure, we are interested in letting $n \to \infty$, in order to deal with the set of all permissible paths of iteration of infinite length for the correspondence $\Gamma$. We denote this set by $\mathscr{P}^{\Gamma} \left( \widehat{\mathbb{C}} \right)$ as written under. 
\begin{eqnarray*} 
\mathscr{P}^{\Gamma} \left( \widehat{\mathbb{C}} \right) & = & \Bigg\{ \left( x_{0}, x^{(1)}_{j_{1}}, x^{(2)}_{j_{2}}, \cdots;\; \alpha_{1}, \alpha_{2}, \cdots \right)\ :\ \left( x^{(r - 1)}_{j_{r - 1}}, x^{(r)}_{j_{r}} \right) \in \Gamma_{\alpha_{r}}\ \text{where}\ x^{(0)}_{j_{0}} = x_{0} \in \widehat{\mathbb{C}}, \\ 
& & \hspace{+3.5cm} \alpha_{r} \in \left\{ 1, 2, \cdots, M \right\} \text{and}\ 1 \le j_{r} \le \lambda_{\alpha_{r}} \left( x^{(r - 1)}_{j_{r - 1}} \right)\ \text{for}\ r \in \mathbb{Z}_{+} \Bigg\}. 
\end{eqnarray*} 
We denote a point in $\mathscr{P}^{\Gamma} \left( \widehat{\mathbb{C}} \right)$ by $\mathfrak{X}^{+} \left( x_{0}; \boldsymbol{\alpha} \right)_{\boldsymbol{j}}$ where $x_{0} \in \widehat{\mathbb{C}},\ \boldsymbol{\alpha} = \left( \alpha_{1}, \alpha_{2}, \cdots \right)$ and $\boldsymbol{j} = \left( j_{1}, j_{2}, \cdots \right)$. Further, we define a metric $\Delta$ on $\mathscr{P}^{\Gamma} \left( \widehat{\mathbb{C}} \right)$ given by 
\begin{eqnarray} 
\label{eqn:infmet} 
& & \Delta \left( \mathfrak{X}^{+} \left( x_{0}; \boldsymbol{\alpha} \right)_{\boldsymbol{j}}, \mathfrak{X}^{+} \left( y_{0}; \boldsymbol{\beta} \right)_{\boldsymbol{k}} \right) \nonumber \\ 
& = & \max \left\{ \sup_{r\, \in\, \mathbb{Z}_{+} \cup \{ 0 \}} \left\{ \frac{1}{2^{r}} d_{\widehat{\mathbb{C}}} \left( x^{(r)}_{j_{r}}, y^{(r)}_{k_{r}} \right) \right\},\; \sup_{r\, \in\, \mathbb{Z}_{+}} \left\{ \frac{1}{2^{r}} \left( 1 - \delta_{\left( \alpha_{r}, \beta_{r} \right)} \right) \right\} \right\}, 
\end{eqnarray} 
where $d_{\widehat{\mathbb{C}}}$ denotes the spherical metric on the Riemann sphere and $\delta_{(p, q)}$ denotes the discrete Kronecker delta function. One may then observe that $\mathscr{P}^{\Gamma} \left( \widehat{\mathbb{C}} \right)$ is a compact metric space. Analogous to the projection maps $\Pi_{\left(r,\, n\right)}^{+}$ and ${\rm Proj}_{\left(r,\, n\right)}^{+}$ that we defined on $\mathscr{P}_{n}^{\Gamma} \left( \widehat{\mathbb{C}} \right)$, we define projection maps $\Pi_{r} : \mathscr{P}^{\Gamma} \left( \widehat{\mathbb{C}} \right) \longrightarrow \widehat{\mathbb{C}}$ for any $r\, \in\, \mathbb{Z}_{+} \cup \{ 0 \}$, and ${\rm Proj}_{r} : \mathscr{P}^{\Gamma} \left( \widehat{\mathbb{C}} \right) \longrightarrow \left\{ 1, 2, \cdots, M \right\}$ for any $r\, \in\, \mathbb{Z}_{+}$ given by 
\[ \Pi_{r} \left( \mathfrak{X}^{+} \left( x_{0}; \boldsymbol{\alpha} \right)_{\boldsymbol{j}} \right)\ =\ x^{(r)}_{j_{r}}\ \ \text{and}\ \ {\rm Proj}_{r} \left( \mathfrak{X}^{+} \left( x_{0}; \boldsymbol{\alpha} \right)_{\boldsymbol{j}} \right)\ =\ \alpha_{r}. \] 
We now define a shift map $\sigma^{\Gamma}$ pertaining to the holomorphic correspondence $\Gamma$ on the space of permissible paths of iteration of infinite length, $\mathscr{P}^{\Gamma} \left( \widehat{\mathbb{C}} \right)$ by 
\begin{equation} 
\label{sigma} 
\sigma^{\Gamma} \left( \mathfrak{X}^{+} \left( x_{0}; \boldsymbol{\alpha} \right)_{\boldsymbol{j}} \right)\ \ =\ \ \left( x^{(1)}_{j_{1}}, x^{(2)}_{j_{2}}, \cdots;\; \alpha_{2}, \cdots \right)\ \ =\ \ \mathfrak{X}^{+} \left( x^{(1)}_{j_{1}}; \sigma \boldsymbol{\alpha} \right)_{\sigma \boldsymbol{j}}, 
\end{equation} 
where by prefixing $\sigma$ to the infinite-lettered word $\boldsymbol{\alpha}$ or $\boldsymbol{j}$, we mean the infinite-lettered obtained by dropping the first letter of $\boldsymbol{\alpha}$ or $\boldsymbol{j}$, as appropriate. In the topology generated by the metric $\Delta$ on the space $\mathscr{P}^{\Gamma} \left( \widehat{\mathbb{C}} \right)$, one may note that the map $\sigma^{\Gamma}$ is continuous. 

Analogous to the set of permissible paths of forward iteration of length $n \in \mathbb{Z}_{+}$, we also define permissible paths of backward iteration of length $n \in \mathbb{Z}_{+}$. Towards that end, consider 
\begin{eqnarray*} 
\mathscr{Q}_{n}^{\Gamma} \left( \widehat{\mathbb{C}} \right) & = & \Bigg\{ \left( y^{(-n)}_{k_{n}}, \cdots, y^{(-1)}_{k_{1}}, y_{0};\; \beta_{n - 1}, \cdots, \beta_{0} \right)\ :\ \left( y^{(-r)}_{k_{r}}, y^{(- (r - 1))}_{k_{r - 1}} \right) \in \Gamma_{\beta_{r - 1}}\ \text{where} \\ 
& & \hspace{+1.5cm} y^{(0)}_{k_{0}} = y_{0} \in \widehat{\mathbb{C}},\ \beta_{r - 1} \in \left\{ 1, 2, \cdots, M \right\}\ \text{and}\ 1 \le k_{r} \le \delta_{\beta_{r - 1}} \left( y^{(- (r - 1))}_{k_{r - 1}} \right) \\ 
& & \hspace{+9cm} \text{for}\ 0 \le r - 1 \le n - 1 \Bigg\}, 
\end{eqnarray*} 
where $\delta_{\beta_{r - 1}} \left( y^{(- (r - 1))}_{k_{r - 1}} \right) = \# \left\{ \left( y^{(-r)}_{k_{r}}, y^{(- (r - 1))}_{k_{r - 1}} \right) \in \Gamma_{\beta_{r - 1}} \right\}$. We denote any point in $\mathscr{Q}_{n}^{\Gamma} \left( \widehat{\mathbb{C}} \right)$ by $\mathfrak{X}_{n}^{-} \left( y_{0}; \boldsymbol{\beta} \right)_{\boldsymbol{k}}$ where $y_{0} \in \widehat{\mathbb{C}},\ \boldsymbol{\beta} = \left( \beta_{n - 1}, \cdots, \beta_{0} \right)$ and $\boldsymbol{k} = \left( k_{n}, \cdots, k_{1} \right)$. Moreover, we define projection maps $\Pi_{\left(r,\, n\right)}^{-} : \mathscr{Q}_{n}^{\Gamma} \left( \widehat{\mathbb{C}} \right) \longrightarrow \widehat{\mathbb{C}}$ for any $0 \le r \le n$, and ${\rm Proj}_{\left(r,\, n\right)}^{-} : \mathscr{Q}_{n}^{\Gamma} \left( \widehat{\mathbb{C}} \right) \longrightarrow \left\{ 1, 2, \cdots, M \right\}$ for $0 \le r < n$ given by 
\[ \Pi_{\left(r,\, n\right)}^{-} \left( \mathfrak{X}_{n}^{-} \left( y_{0}; \boldsymbol{\beta} \right)_{\boldsymbol{k}} \right)\ =\ y^{(-r)}_{k_{r}}\ \ \text{and}\ \ {\rm Proj}_{\left(r,\, n\right)}^{-} \left( \mathfrak{X}_{n}^{-} \left( y_{0}; \boldsymbol{\beta} \right)_{\boldsymbol{k}} \right)\ =\ \beta_{r}. \] 

For more details on the set of permissible paths of iteration, the metric defined therein, the shift map and the projection maps, as defined in this section, readers are referred to \cite{bs:2021, ss:2025}. 

\section{The push-forward of a measure} 
\label{sec:pfm} 

Suppose $\mathscr{M} \left( \mathscr{P}^{\Gamma} \left( \widehat{\mathbb{C}} \right) \right)$ denotes the collection of all probability measures supported on $\mathscr{P}^{\Gamma} \left( \widehat{\mathbb{C}} \right)$. Since $\mathscr{P}^{\Gamma} \left( \widehat{\mathbb{C}} \right)$ is a compact metric space, we can accord a metric (compatible with the weak*-topology) on $\mathscr{M} \left( \mathscr{P}^{\Gamma} \left( \widehat{\mathbb{C}} \right) \right)$, given by 
\[ d \left( \mu_{1}, \mu_{2} \right) = \sum_{n\, \ge\, 1} \frac{1}{2^{n}} \left| \int f_{n} \mathrm{d}\mu_{1} - \int f_{n} \mathrm{d}\mu_{2} \right|, \] 
where $\left\{ f_{n} \right\}_{n\, \ge\, 1}$ is a dense sequence of continuous real-valued functions defined on $\mathscr{P}^{\Gamma} \left( \widehat{\mathbb{C}} \right)$. 

Let $\mathscr{M}_{\sigma^{\Gamma}} \left( \mathscr{P}^{\Gamma} \left( \widehat{\mathbb{C}} \right) \right) \subseteq \mathscr{M} \left( \mathscr{P}^{\Gamma} \left( \widehat{\mathbb{C}} \right) \right)$ denote the set of all $\sigma^{\Gamma}$-invariant probability measures. Since $\sigma^{\Gamma}$ is a continuous map defined on the compact metric space $\mathscr{P}^{\Gamma} \left( \widehat{\mathbb{C}} \right)$, an elementary result from \cite{pw:1975} says that $\mathscr{M}_{\sigma^{\Gamma}} \left( \mathscr{P}^{\Gamma} \left( \widehat{\mathbb{C}} \right) \right)$ is non-empty, convex and compact. 

\begin{definition} 
For any measure $\mu \in \mathscr{M}_{\sigma^{\Gamma}} \left( \mathscr{P}^{\Gamma} \left( \widehat{\mathbb{C}} \right) \right)$, we define its \emph{push-forward measure} supported on $\widehat{\mathbb{C}}$, denoted by $\left( \Pi_{r} \right)_{*} \mu$, under the projection $\Pi_{r}$ for any $r \in \mathbb{Z}_{+} \cup \{ 0 \}$ as, 
\[ \left( \left( \Pi_{r} \right)_{*} \mu \right) (B)\ \ =\ \ \mu \left( \Pi_{r}^{-1} B \right),\ \ \ \text{for any Borel set}\ B \subset \widehat{\mathbb{C}}. \] 
\end{definition} 

Observe that this definition entails that for any $r \in \mathbb{Z}_{+} \cup \{ 0 \}$ and for an arbitrary choice of $f \in \mathcal{C} \left( \widehat{\mathbb{C}}, \mathbb{R} \right)$, 
\begin{eqnarray*} 
\int_{\widehat{\mathbb{C}}} f \mathrm{d}\left( \Pi_{r} \right)_{*} \mu & = & \int_{\mathscr{P}^{\Gamma} \left( \widehat{\mathbb{C}} \right)} \left( f \circ \Pi_{r} \right) \mathrm{d}\mu\ =\ \int_{\mathscr{P}^{\Gamma} \left( \widehat{\mathbb{C}} \right)} \left( f \circ \Pi_{r} \circ \sigma^{\Gamma} \right) \mathrm{d}\mu \\ 
& = & \int_{\mathscr{P}^{\Gamma} \left( \widehat{\mathbb{C}} \right)} \left( f \circ \Pi_{r + 1} \right) \mathrm{d}\mu\ = \int_{\widehat{\mathbb{C}}} f \mathrm{d}\left( \Pi_{r + 1} \right)_{*} \mu. 
\end{eqnarray*} 
Thus, $\left( \Pi_{r} \right)_{*} \mu = \left( \Pi_{r + 1} \right)_{*} \mu$, almost everywhere on $\widehat{\mathbb{C}}$ and hence, it is sufficient to work with the measure $\left( \Pi_{0} \right)_{*} \mu$. However, note that there is a possibility that $\left( \Pi_{0} \right)_{*} \mu_{1} \equiv \left( \Pi_{0} \right)_{*} \mu_{2}$ on $\widehat{\mathbb{C}}$, even when $\mu_{1} \ne \mu_{2}$ on $\mathscr{P}^{\Gamma} \left( \widehat{\mathbb{C}} \right)$. Let $\mathscr{S}^{\Gamma} \subseteq \mathscr{M} \left( \widehat{\mathbb{C}} \right)$ be the set of all measures such that 
\[ \mathscr{S}^{\Gamma}\ \ =\ \ \left\{ \nu \in \mathscr{M} \left( \widehat{\mathbb{C}} \right) : \nu = \left( \Pi_{0} \right)_{*} \mu\ \text{for some}\ \mu \in \mathscr{M}_{\sigma^{\Gamma}} \left( \mathscr{P}^{\Gamma} \left( \widehat{\mathbb{C}} \right) \right) \right\}. \]

\begin{lemma} 
$\mathscr{S}^{\Gamma}$ is a weak*-compact, convex subset of $\mathscr{M} \left( \widehat{\mathbb{C}} \right)$. 
\end{lemma}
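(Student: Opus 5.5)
The plan is to realise $\mathscr{S}^{\Gamma}$ as the image of $\mathscr{M}_{\sigma^{\Gamma}} \left( \mathscr{P}^{\Gamma} \left( \widehat{\mathbb{C}} \right) \right)$ under the push-forward map
\[ \Phi : \mathscr{M} \left( \mathscr{P}^{\Gamma} \left( \widehat{\mathbb{C}} \right) \right) \longrightarrow \mathscr{M} \left( \widehat{\mathbb{C}} \right), \qquad \Phi (\mu) = \left( \Pi_{0} \right)_{*} \mu. \]
By definition, $\mathscr{S}^{\Gamma} = \Phi \left( \mathscr{M}_{\sigma^{\Gamma}} \left( \mathscr{P}^{\Gamma} \left( \widehat{\mathbb{C}} \right) \right) \right)$. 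We already know from \cite{pw:1975} that $\mathscr{M}_{\sigma^{\Gamma}} \left( \mathscr{P}^{\Gamma} \left( \widehat{\mathbb{C}} \right) \right)$ is non-empty, convex and weak*-compact. It will therefore suffice to show that $\Phi$ is affine and weak*-continuous. Both properties then pass to the image: a continuous image of a compact set is compact, and an affine image of a convex set is convex.

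\textbf{Affineness.} Take $\mu_{1}, \mu_{2}$ and $t \in [0, 1]$. For every Borel set $B \subseteq \widehat{\mathbb{C}}$,
\[ \left( t \mu_{1} + (1 - t) \mu_{2} \right) \left( \Pi_{0}^{-1} B \right) = t \mu_{1} \left( \Pi_{0}^{-1} B \right) + (1 - t) \mu_{2} \left( \Pi_{0}^{-1} B \right). \]
Hence $\Phi$ is affine. Concretely, if $\nu_{i} = \left( \Pi_{0} \right)_{*} \mu_{i}$ with $\mu_{i}$ invariant, then $t \nu_{1} + (1 - t) \nu_{2} = \Phi \left( t \mu_{1} + (1 - t) \mu_{2} \right)$. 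The measure $t \mu_{1} + (1 - t) \mu_{2}$ is again $\sigma^{\Gamma}$-invariant, which gives convexity of $\mathscr{S}^{\Gamma}$.

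\textbf{Continuity.} First observe that $\Pi_{0}$ is continuous, and in fact $1$-Lipschitz with respect to $\Delta$: by \eqref{eqn:infmet}, $d_{\widehat{\mathbb{C}}} \left( x_{0}, y_{0} \right) \le \Delta \left( \mathfrak{X}^{+} \left( x_{0}; \boldsymbol{\alpha} \right)_{\boldsymbol{j}}, \mathfrak{X}^{+} \left( y_{0}; \boldsymbol{\beta} \right)_{\boldsymbol{k}} \right)$. Now let $\mu_{n} \to \mu$ weak*. For any $f \in \mathcal{C} \left( \widehat{\mathbb{C}}, \mathbb{R} \right)$, the composite $f \circ \Pi_{0}$ is continuous on $\mathscr{P}^{\Gamma} \left( \widehat{\mathbb{C}} \right)$, so
\[ \int_{\widehat{\mathbb{C}}} f \, \mathrm{d} \Phi \left( \mu_{n} \right) = \int f \circ \Pi_{0} \, \mathrm{d} \mu_{n} \longrightarrow \int f \circ \Pi_{0} \, \mathrm{d} \mu = \int_{\widehat{\mathbb{C}}} f \, \mathrm{d} \Phi (\mu). \]
Thus $\Phi \left( \mu_{n} \right) \to \Phi (\mu)$ weak*. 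Both measure spaces are metrizable (via the metric $d$ above and its analogue on $\mathscr{M} \left( \widehat{\mathbb{C}} \right)$), so sequential continuity is enough.

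\textbf{Compactness.} $\mathscr{S}^{\Gamma}$ is the continuous image of a compact set, so it is weak*-compact. Equivalently, given $\nu_{n} = \Phi \left( \mu_{n} \right) \to \nu$, extract a subsequence $\mu_{n_{k}} \to \mu$ in the compact set of invariant measures; continuity of $\Phi$ then forces $\nu = \Phi (\mu) \in \mathscr{S}^{\Gamma}$, so $\mathscr{S}^{\Gamma}$ is closed in the compact space $\mathscr{M} \left( \widehat{\mathbb{C}} \right)$.

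\textbf{Main difficulty.} No step is genuinely hard. The only points needing care are the continuity of $\Pi_{0}$ with respect to $\Delta$ and the fact that the image measures are probability measures. The latter holds because $\Pi_{0}^{-1} \left( \widehat{\mathbb{C}} \right) = \mathscr{P}^{\Gamma} \left( \widehat{\mathbb{C}} \right)$, so $\Phi (\mu) \left( \widehat{\mathbb{C}} \right) = 1$.
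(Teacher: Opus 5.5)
Your proof is correct and follows essentially the same route as the paper. The paper proves closedness sequentially: it lifts $\nu_{n}$ to invariant $\mu_{n}$, extracts a convergent subsequence in $\mathscr{M}_{\sigma^{\Gamma}} \left( \mathscr{P}^{\Gamma} \left( \widehat{\mathbb{C}} \right) \right)$, and uses continuity of $f \circ \Pi_{0}$ to identify the limit, which is exactly your ``equivalently'' argument, and both proofs get convexity from convexity of the invariant measures together with linearity of push-forward; your framing as a continuous affine image of a compact convex set, with the explicit $1$-Lipschitz check of $\Pi_{0}$ against $\Delta$, simply makes the paper's implicit steps explicit.
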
 

\begin{proof} 
In order to prove $\mathscr{S}^{\Gamma}$ is weak*-compact, it is sufficient to consider a sequence of measures, say $\left\{ \nu_{n} \right\}_{n\, \in\, \mathbb{Z}_{+}}$ in $\mathscr{S}^{\Gamma}$ that converges to $\nu_{0} \in \mathscr{M} \left( \widehat{\mathbb{C}} \right)$ and establish that $\nu_{0} \in \mathscr{S}^{\Gamma}$. Corresponding to every element $\nu_{n}$, we can choose some measure $\mu_{n} \in \mathscr{M}_{\sigma^{\Gamma}} \left( \mathscr{P}^{\Gamma} \left( \widehat{\mathbb{C}} \right) \right)$ such that $\nu_{n} = \left( \Pi_{0} \right)_{*} \mu_{n}$. Since $\mathscr{M}_{\sigma^{\Gamma}} \left( \mathscr{P}^{\Gamma} \left( \widehat{\mathbb{C}} \right) \right)$ is a compact space, we know that the sequence of measures $\left\{ \mu_{n} \right\}_{n\, \in\, \mathbb{Z}_{+}}$ has a subsequence, say $\left\{ \mu_{n_{l}} \right\}_{l\, \in\, \mathbb{Z}_{+}}$ that converges to some measure, say $\mu_{0} \in \mathscr{M}_{\sigma^{\Gamma}} \left( \mathscr{P}^{\Gamma} \left( \widehat{\mathbb{C}} \right) \right)$. Hence, the ideal candidate for the measure $\nu_{0} \in \mathscr{M} \left( \widehat{\mathbb{C}} \right)$ is the measure $\left( \Pi_{0} \right)_{*} \mu_{0} \in \mathscr{S}^{\Gamma}$. We now prove that $\nu_{0} \equiv \left( \Pi_{0} \right)_{*} \mu_{0}$. 

Let $f \in \mathcal{C} \left( \widehat{\mathbb{C}}, \mathbb{R} \right)$ be some arbitrary function. Since 
\begin{eqnarray*} 
\left| \int_{\widehat{\mathbb{C}}} f \mathrm{d}\left( \Pi_{0} \right)_{*} \mu_{0} - \int_{\widehat{\mathbb{C}}} f \mathrm{d}\nu_{0} \right| & \le & \left| \int_{\mathscr{P}^{\Gamma} \left( \widehat{\mathbb{C}} \right)} \left( f \circ \Pi_{0} \right) \mathrm{d}\mu_{0} - \int_{\mathscr{P}^{\Gamma} \left( \widehat{\mathbb{C}} \right)} \left( f \circ \Pi_{0} \right) \mathrm{d}\mu_{n_{l}} \right| \\ 
& & \hspace{+3cm} + \left| \int_{\widehat{\mathbb{C}}} f \mathrm{d}\left( \Pi_{0} \right)_{*} \mu_{n_{l}} - \int_{\widehat{\mathbb{C}}} f \mathrm{d}\nu_{0} \right|, 
\end{eqnarray*} 
one can see that each of the terms in the right hand side of the above inequality can be made as small as required, implying $\left( \Pi_{0} \right)_{*} \mu_{0} = \nu_{0}$. Further, the convexity of the space $\mathscr{M}_{\sigma^{\Gamma}} \left( \mathscr{P}^{\Gamma} \left( \widehat{\mathbb{C}} \right) \right)$ assures us that $\mathscr{S}^{\Gamma}$ is a convex subset of $\mathscr{M} \left( \widehat{\mathbb{C}} \right)$. 
\end{proof} 

\section{An intermediate entropy} 
\label{sec:intent} 

Let $\nu \in \mathscr{S}^{\Gamma}$. Suppose $\mathcal{Q}$ is a finite $\nu$-measurable partition of $\widehat{\mathbb{C}}$, say $\mathcal{Q} = \left\{ Q_{1}, Q_{2}, \cdots, Q_{s} \right\}$. Then, corresponding to the partition $\mathcal{Q}$, the collection of sets denoted by $\mathcal{Q}^{\Gamma}$ and defined as 
\[ \mathcal{Q}^{\Gamma}\ \ =\ \ \left\{ \Pi_{0}^{-1} \left( Q_{i} \right) \cap {\rm Proj}_{1}^{-1} \left( \left\{ t \right\} \right) : 1 \le i \le s, 1 \le t \le M \right\} \] 
forms a $\mu_{i}$-measurable partition of $\mathscr{P}^{\Gamma} \left( \widehat{\mathbb{C}} \right)$ for all measures $\mu_{i} \in \mathscr{M}_{\sigma^{\Gamma}} \left( \mathscr{P}^{\Gamma} \left( \widehat{\mathbb{C}} \right) \right)$ satisfying $\nu = \left( \Pi_{0} \right)_{*} \mu_{i}$. We now define the information function of the partition $\mathcal{Q}$ with respect to the ordered pair of measures $\left( \nu,\, \mu \right)$ for $\nu \in \mathscr{S}^{\Gamma}$ and $\mu \in \mathscr{M}_{\sigma^{\Gamma}} \left( \mathscr{P}^{\Gamma} \left( \widehat{\mathbb{C}} \right) \right)$ satisfying $\nu = \left( \Pi_{0} \right)_{*} \mu$ as $H_{\left( \nu,\, \mu \right)} \left( \mathcal{Q} \right) = H_{\mu} \left( \mathcal{Q}^{\Gamma} \right)$ where for any measurable collection of subsets, say $\mathcal{A} = \left\{ A_{1}, A_{2}, \cdots, A_{s} \right\}$ of $\mathscr{P}^{\Gamma} \left( \widehat{\mathbb{C}} \right)$ we define 
\[ H_{\mu} \left( \left\{ A_{1}, A_{2}, \cdots, A_{s} \right\} \right)\ \ =\ \ - \sum_{i\, =\, 1}^{s} \mu (A_{i}) \log \mu (A_{i}). \] 
Further, if $\mathcal{B} = \left\{ B_{1}, B_{2}, \cdots, B_{s'} \right\}$ is another collection of subsets of $\mathscr{P}^{\Gamma} \left( \widehat{\mathbb{C}} \right)$, we define 
\[ \mathcal{A} \vee \mathcal{B}\ \ =\ \ \left\{ A_{i} \cap B_{i'}\ :\ A_{i} \in \mathcal{A}\ \text{and}\ B_{i'} \in \mathcal{B}\ \ \text{for}\ \ 1 \le i \le s,\ 1 \le i' \le s' \right\}. \]

Then, the information function of the partition $\mathcal{Q}$ with respect to the ordered pair of measures $\left( \nu,\, \mu \right)$ pertaining to the holomorphic correspondence $\Gamma$ is now defined as 
\[ H_{\left( \nu,\, \mu \right)} \left( \mathcal{Q}; \Gamma \right)\ \ =\ \ \lim_{n\, \to\, \infty} \frac{1}{n} H_{\mu} \left( \bigvee_{p\, =\, 0}^{n - 1} \left\{ \Pi_{p}^{-1} \left( Q_{i} \right) \cap {\rm Proj}_{p + 1}^{-1} \left( \left\{ t \right\} \right) : 1 \le i \le s,\ 1 \le t \le M \right\} \right). \] 
Finally, we take the supremum over all possible $\nu$-measurable partitions $\mathcal{Q}$ of $\widehat{\mathbb{C}}$ and define the \emph{intermediate entropy} of the correspondence $\Gamma$ with respect to the ordered pair of measures $\left( \nu,\, \mu \right)$, given by 
\[ h_{\left( \nu,\, \mu \right)} \left( \Gamma \right)\ \ =\ \ \sup_{\mathcal{Q}} H_{\left( \nu,\, \mu \right)} \left( \mathcal{Q}; \Gamma \right). \] 

With these definitions, we now state and prove a theorem that relates the intermediate entropy of the correspondence $\Gamma$ with respect to the ordered pair of measures $\left( \nu,\, \mu \right)$ and the measure-theoretic entropy of the shift map $\sigma^{\Gamma}$ with respect to the measure $\mu$. 

\begin{theorem}
\label{Entropy} 
Let $\Gamma$ be a holomorphic correspondence defined on $\widehat{\mathbb{C}}$, as written in Definition \ref{holo}, whose permissible paths of iteration is given by $\mathscr{P}^{\Gamma} \left( \widehat{\mathbb{C}} \right)$. Let $\sigma^{\Gamma}$ be the left shift map defined on $\mathscr{P}^{\Gamma} \left( \widehat{\mathbb{C}} \right)$. Consider the ordered pair of measures $\left( \nu,\, \mu \right)$ for $\nu \in \mathscr{S}^{\Gamma}$ and $\mu \in \mathscr{M}_{\sigma^{\Gamma}} \left( \mathscr{P}^{\Gamma} \left( \widehat{\mathbb{C}} \right) \right)$ satisfying $\nu = \left( \Pi_{0} \right)_{*} \mu$. Then, 
\[ h_{\left( \nu,\, \mu \right)} \left( \Gamma \right)\ \ =\ \ h_{\mu} \left( \sigma^{\Gamma} \right). \] 
\end{theorem}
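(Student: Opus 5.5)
The plan is to show that each intermediate information function is the Kolmogorov--Sinai entropy of $\sigma^{\Gamma}$ relative to a particular partition of $\mathscr{P}^{\Gamma} \left( \widehat{\mathbb{C}} \right)$, and then to verify that these partitions are rich enough to generate. Fix a finite $\nu$-measurable partition $\mathcal{Q} = \left\{ Q_{1}, \cdots, Q_{s} \right\}$. Since $\Pi_{p} = \Pi_{0} \circ \left( \sigma^{\Gamma} \right)^{p}$ and ${\rm Proj}_{p + 1} = {\rm Proj}_{1} \circ \left( \sigma^{\Gamma} \right)^{p}$, each element of the $p$-th partition in the join equals $\left( \sigma^{\Gamma} \right)^{-p}$ applied to an element of $\mathcal{Q}^{\Gamma}$. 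Therefore
\[ \bigvee_{p\, =\, 0}^{n - 1} \left\{ \Pi_{p}^{-1} \left( Q_{i} \right) \cap {\rm Proj}_{p + 1}^{-1} \left( \left\{ t \right\} \right) \right\}\ =\ \bigvee_{p\, =\, 0}^{n - 1} \left( \sigma^{\Gamma} \right)^{-p} \mathcal{Q}^{\Gamma}, \]
and hence $H_{\left( \nu,\, \mu \right)} \left( \mathcal{Q}; \Gamma \right) = h_{\mu} \left( \sigma^{\Gamma}, \mathcal{Q}^{\Gamma} \right)$. In particular the limit in the definition exists, by the usual subadditivity argument using the $\sigma^{\Gamma}$-invariance of $\mu$. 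Taking the supremum over $\mathcal{Q}$ immediately gives $h_{\left( \nu,\, \mu \right)} \left( \Gamma \right) \le h_{\mu} \left( \sigma^{\Gamma} \right)$.

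For the reverse inequality I would use the Kolmogorov--Sinai generator argument in its sequential form. Take a sequence of finite partitions $\mathcal{Q}_{k}$ of $\widehat{\mathbb{C}}$ with $\mathcal{Q}_{k+1}$ refining $\mathcal{Q}_{k}$ and ${\rm diam}\, \mathcal{Q}_{k} \to 0$, for instance from dyadic spherical cells. The boundary sets may be handled as measurable pieces, or one can choose cells with $\nu$-null boundaries; only measurability is needed here. The partitions $\mathcal{Q}_{k}^{\Gamma}$ are then increasing. The key claim is that the $\sigma$-algebras generated by $\bigvee_{p\, \ge\, 0} \left( \sigma^{\Gamma} \right)^{-p} \mathcal{Q}_{k}^{\Gamma}$, taken over all $k$, together generate the Borel $\sigma$-algebra of $\mathscr{P}^{\Gamma} \left( \widehat{\mathbb{C}} \right)$ modulo $\mu$. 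To see this, observe that the join over $0 \le p \le N$ determines the symbols $\alpha_{1}, \cdots, \alpha_{N+1}$ exactly and each coordinate $x^{(p)}_{j_{p}}$ for $p \le N$ up to ${\rm diam}\, \mathcal{Q}_{k}$. By the form of the metric $\Delta$ in Equation \eqref{eqn:infmet}, the atoms of this join have $\Delta$-diameter at most $\max \left\{ {\rm diam}\, \mathcal{Q}_{k}, 2^{-N} \right\}$. This bound tends to zero, so every open set is a countable union of such atoms up to the required approximation.

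Given this, the standard result $h_{\mu} \left( \sigma^{\Gamma} \right) = \lim_{k\, \to\, \infty} h_{\mu} \left( \sigma^{\Gamma}, \mathcal{Q}_{k}^{\Gamma} \right)$ holds for an increasing sequence of partitions whose iterated joins generate. It follows from $h_{\mu}(\sigma, \mathcal{A}) \le h_{\mu}(\sigma, \mathcal{B}) + H_{\mu}(\mathcal{A} \mid \mathcal{B})$ together with the martingale convergence of conditional entropy. Combining this with the first step yields $h_{\mu} \left( \sigma^{\Gamma} \right) \le \sup_{k} H_{\left( \nu,\, \mu \right)} \left( \mathcal{Q}_{k}; \Gamma \right) \le h_{\left( \nu,\, \mu \right)} \left( \Gamma \right)$.

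I expect the main obstacle to be the generating step. The symbol coordinate ${\rm Proj}_{1}$ is essential there: two paths sharing all points $x^{(r)}_{j_{r}}$ but differing in some $\alpha_{r}$ (possible when the $\Gamma_{t}$ intersect or repeat with multiplicity) must still be separated. The index $j_{r}$ is not recorded separately, but in $\Delta$ it only enters through the points themselves, so no further information is needed. I would also check carefully that a point counted with multiplicity does not create atoms that $\Delta$ fails to separate.
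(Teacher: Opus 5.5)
Your proposal is correct and follows the paper's proof. The identity $\bigvee_{p=0}^{n-1}\{\Pi_{p}^{-1}(Q_{i})\cap {\rm Proj}_{p+1}^{-1}(\{t\})\}=\bigvee_{p=0}^{n-1}(\sigma^{\Gamma})^{-p}\mathcal{Q}^{\Gamma}$ gives $H_{(\nu,\mu)}(\mathcal{Q};\Gamma)=h_{\mu}(\sigma^{\Gamma},\mathcal{Q}^{\Gamma})$ and hence the easy inequality, and the reverse inequality comes from lifted partitions whose finite $\sigma^{\Gamma}$-joins have $\Delta$-diameter tending to zero.

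The only difference is cosmetic. You use the increasing-generator (martingale) form of the Kolmogorov--Sinai approximation, whereas the paper applies Walters' diameter-zero lemma to $\bigvee_{r=0}^{q}(\sigma^{\Gamma})^{-r}\mathcal{P}^{\Gamma}$ and then uses $h_{\mu}(\sigma^{\Gamma},\bigvee_{r<q}(\sigma^{\Gamma})^{-r}\mathcal{P})=h_{\mu}(\sigma^{\Gamma},\mathcal{P})$. Also, your $2^{-N}$ diameter bound needs a harmless constant factor for the diameter of $\widehat{\mathbb{C}}$.
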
 

\begin{proof} 
Since, $\sigma^{\Gamma}$ is a continuous map defined on the compact metric space $\mathscr{P}^{\Gamma} \left( \widehat{\mathbb{C}} \right)$, the measure-theoretic entropy of $\sigma^{\Gamma}$ with respect to $\mu$ is given by 
\[ \sup_{\mathcal{P}} \left\{ \lim_{n\, \to\, \infty} \frac{1}{n} H_{\mu} \left( \bigvee_{p\, =\, 0}^{n - 1} \left\{ \left( \sigma^{\Gamma} \right)^{-p} \left( \mathcal{P} \right)   \right\} \right) \right\}, \]
where the supremum is taken over all finite $\mu$-measurable  partitions $\mathcal{P}$ of $\mathscr{P}^{\Gamma} \left( \widehat{\mathbb{C}} \right)$. Further, for any partition $\mathcal{P} = \{ P_{1}, P_{2}, \cdots, P_{s} \}$, we have 
\[ \left( \sigma^{\Gamma} \right)^{-p} \left( \mathcal{P} \right)\ \ =\ \ \left\{ \left( \sigma^{\Gamma} \right)^{-p}(P_{1}), \left( \sigma^{\Gamma} \right)^{-p}(P_{2}), \cdots, \left( \sigma^{\Gamma} \right)^{-p}(P_{s}) \right\}. \]

As written earlier, we know that if $\mathcal{Q} = \left\{ Q_{1}, Q_{2}, \cdots, Q_{s} \right\}$ is a finite $\nu$-measurable partition of $\widehat{\mathbb{C}}$, then there exists a finite $\mu$-measurable partition of $\mathscr{P}^{\Gamma} \left( \widehat{\mathbb{C}} \right)$, corresponding to $\mathcal{Q}$, denoted by $\mathcal{Q}^{\Gamma}$. Then it is easy to verify that 
\begin{eqnarray*} 
& & H_{\mu} \left( \bigvee_{p\, =\, 0}^{n - 1} \left\{ \Pi_{p}^{-1} \left( Q_{i} \right) \cap {\rm Proj}_{p + 1}^{-1} \left( \left\{ t \right\} \right)\ :\ 1 \le i \le s,\ 1 \le t \le M \right\} \right) \\ 
& = & H_{\mu} \left( \bigvee_{p\, =\, 0}^{n - 1} \left\{ \left( \sigma^{\Gamma} \right)^{-p} \left( \mathcal{Q}^\Gamma \right) \right\} \right). 
\end{eqnarray*} 

We now state a lemma from \cite{pw:1975}, without proof however, in the language that we have so far built in this paper.

\begin{lemma}
\label{diam0}
Let  $\sigma^{\Gamma} : \mathscr{P}^{\Gamma} \left( \widehat{\mathbb{C}} \right) \longrightarrow \mathscr{P}^{\Gamma} \left( \widehat{\mathbb{C}} \right)$ be the continuous left shift map as defined in Equation \eqref{sigma}. Let $\mu \in \mathscr{M}_{\sigma^{\Gamma}} \left( \mathscr{P}^{\Gamma} \left( \widehat{\mathbb{C}} \right) \right)$. 
\begin{enumerate} 
\item[(a)] Suppose $\mathcal{P}$ is a $\mu$-measurable partition of $\mathscr{P}^{\Gamma} \left( \widehat{\mathbb{C}} \right)$. Then, 
\[ \lim_{n\, \to\, \infty} \frac{1}{n} H_{\mu} \left( \bigvee_{p\, =\, 0}^{n - 1} \left\{ \left( \sigma^{\Gamma} \right)^{-p} \left( \mathcal{P} \right) \right\} \right)\ \ =\ \ \lim_{n\, \to\, \infty} \frac{1}{n} H_{\mu} \left( \bigvee_{p\, =\, 0}^{n - 1} \left\{ \left( \sigma^{\Gamma} \right)^{-p} \left( \bigvee_{i\, =\, 0}^{p' - 1} \left\{ \left( \sigma^{\Gamma} \right)^{-i} \left( \mathcal{P} \right) \right\} \right) \right\} \right). \] 
\item[(b)] Suppose $\left\{ \mathcal{P}_{s'} \right\}_{s'\, \in\, \mathbb{Z}_{+}}$ is a sequence of $\mu$-measurable partitions of $\mathscr{P}^{\Gamma} \left( \widehat{\mathbb{C}} \right)$ satisfying ${\rm diam} \left( \mathcal{P}_{s'} \right) = \max \left\{ {\rm diam} (P) : P \in \mathcal{P}_{s'} \right\} \to 0$, as $s' \to \infty$. Then, the measure-theoretic entropy of the shift map $\sigma^{\Gamma}$ pertaining to the holomorphic correspondence $\Gamma$ with respect to the measure $\mu$ is given by 
\[ h_{\mu} \left( \sigma^{\Gamma} \right)\ \ =\ \ \lim_{s'\, \to\, \infty} \lim_{n\, \to\, \infty} \frac{1}{n} H_{\mu} \left( \bigvee_{p\, =\, 0}^{n - 1} \left\{ \left( \sigma^{\Gamma} \right)^{-p} \left( \mathcal{P}_{s'} \right) \right\} \right). \] 
\end{enumerate} 
\end{lemma} 

We now continue with the proof of Theorem \ref{Entropy}. Let $\left\{ \mathcal{P}_{s'} \right\}_{s'\, \in\, \mathbb{Z}_{+}}$ denote a collection of finite $\nu$-measurable partitions of $\widehat{\mathbb{C}}$ where $\mathcal{P}_{s'} = \left\{ P_{(s', 1)}, P_{(s', 2)}, \cdots, P_{(s', s(s'))} \right\}$ with ${\rm diam} \left( \mathcal{P}_{s'} \right) \le p_{s'}$ and $p_{s'} \to 0$ as $s' \to \infty$. Then, corresponding to $\mathcal{P}_{s'}$, there exists a positive integer $q_{s'}$ such that 
\[ \mathcal{P}_{(s',\, q_{s'})}^{\Gamma}\ \ =\ \ \bigvee_{r\, =\, 0}^{q_{s'}} \left\{ \left( \sigma^{\Gamma} \right)^{-r} \left( \Pi_{0}^{-1} \left( P_{(s', i)} \right) \cap {\rm Proj}_{1}^{-1} \left( \left\{ t \right\} \right) \right)\ :\ 1 \le i \le s(s'), 1 \le t \le M \right\} \]
is a $\mu$-measurable partition of $\mathscr{P}^{\Gamma} \left( \widehat{\mathbb{C}} \right)$ corresponding to $\mathcal{P}_{s'}$ that satisfies the condition ${\rm diam} \left( \mathcal{P}_{(s',\, q_{s'})}^{\Gamma} \right) \le p_{s'}$. As $s' \to \infty$, we obtain from Lemma \eqref {diam0} that 
\begin{eqnarray*} 
h_{\mu} \left( \sigma^{\Gamma} \right) & = & \lim_{s'\, \to\, \infty} \lim_{n\, \to\, \infty} \frac{1}{n} H_{\mu} \left( \bigvee_{p\, =\, 0}^{n - 1} \left\{ \left( \sigma^{\Gamma} \right)^{-p} \left( \mathcal{P}_{s'}^{\Gamma} \right) \right\} \right) \\ 
& = & \lim_{s'\, \to\, \infty} \lim_{q_{s'}\, \to\, \infty} \frac{1}{q_{s'}} H_{\mu} \left( \bigvee_{p\, =\, 0}^{q_{s'} - 1} \left\{ \left( \sigma^{\Gamma} \right)^{-p} \left( \mathcal{P}_{(s',\, q_{s'})}^{\Gamma} \right) \right\} \right) \\ 
& = & \lim_{s'\, \to\, \infty} \lim_{n \to \infty} \frac{1}{n} H_{\mu} \left( \bigvee_{p\, =\, 0}^{n - 1} \left\{ \Pi_{p}^{-1} \left( P_{(s', i)} \right) \cap {\rm Proj}_{p + 1}^{-1} \left( \left\{ t \right\} \right) : 1 \le i \le s(s'),\ 1 \le t \le M \right\} \right) \\ 
& = & \lim_{s' \to \infty} H_{\left( \nu,\, \mu \right)} \left( \mathcal{P}_{s'} \right) \\ 
& \le & h_{\left( \nu,\, \mu \right)} \left( \Gamma \right). 
\end{eqnarray*} 
The other side inequality is quite trivial and hence the proof.
 \end{proof} 

\section{Measure-theoretic entropy} 
\label{sec:mte} 

We begin this section with the definition of the measure-theoretic entropy of the holomorphic correspondence $\Gamma$ defined on $\widehat{\mathbb{C}}$, as written in Definition \ref{holo}, with respect to a measure $\nu \in \mathscr{S}^{\Gamma}$. 

\begin{definition} 
Let $\Gamma$ be a holomorphic correspondence defined on $\widehat{\mathbb{C}}$, as written in Definition \ref{holo}, whose permissible paths of iteration is given by $\mathscr{P}^{\Gamma} \left( \widehat{\mathbb{C}} \right)$. Let $\nu \in \mathscr{S}^{\Gamma}$. Then, the \emph{measure-theoretic entropy} of the holomorphic correspondence $\Gamma$ is defined as 
\[ h_{\nu} \left( \Gamma \right)\ \ =\ \ \sup \left\{ h_{\left( \nu,\, \mu_{n} \right)} \left( \Gamma \right)\ :\ \mu_{n} \in \mathscr{M}_{\sigma^{\Gamma}} \left( \mathscr{P}^{\Gamma} \left( \widehat{\mathbb{C}} \right) \right)\ \text{satisfying}\ \nu = \left( \Pi_{0} \right)_{*} \mu_{n} \right\}. \] 
\end{definition} 

Having defined the measure-theoretic entropy of the holomorphic correspondence $\Gamma$, we now write the variational principle as follows. 

\begin{theorem}[Variational principle for a holomorphic correspondence] 
\label{thm:vphc}
Let $\Gamma$ be a holomorphic correspondence defined on $\widehat{\mathbb{C}}$, as written in Definition \ref{holo} and $f \in \mathcal{C} \left( \widehat{\mathbb{C}}, \mathbb{R} \right)$. Then, the pressure of the function $f$ can alternatively be characterized as 
\[ {\rm Pr} (\Gamma, f)\ \ =\ \ \sup_{\nu\, \in\, \mathscr{S}^{\Gamma}} \left\{ h_{\nu} \left( \Gamma \right) + \int f \mathrm{d}\nu \right\}. \]
\end{theorem}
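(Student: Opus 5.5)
The plan is to reduce the statement to the classical variational principle, Equation \eqref{eqn:varprT}, for the continuous shift $\sigma^{\Gamma}$ on the compact metric space $\mathscr{P}^{\Gamma} \left( \widehat{\mathbb{C}} \right)$, applied to the potential $F = f \circ \Pi_{0} \in \mathcal{C} \left( \mathscr{P}^{\Gamma} \left( \widehat{\mathbb{C}} \right), \mathbb{R} \right)$. The first step is to unfold the right hand side. By the definition of $h_{\nu}(\Gamma)$ and Theorem \ref{Entropy}, for every $\nu \in \mathscr{S}^{\Gamma}$ we have $h_{\nu}(\Gamma) = \sup \left\{ h_{\mu} \left( \sigma^{\Gamma} \right) : \mu \in \mathscr{M}_{\sigma^{\Gamma}}, (\Pi_{0})_{*}\mu = \nu \right\}$. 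For any such $\mu$, the definition of the push-forward gives $\int f \, \mathrm{d}\nu = \int F \, \mathrm{d}\mu$. Since every invariant $\mu$ projects to some $\nu \in \mathscr{S}^{\Gamma}$, taking the supremum first over the fibre $\{\mu : (\Pi_{0})_{*}\mu = \nu\}$ and then over $\nu$ yields
\[ \sup_{\nu\, \in\, \mathscr{S}^{\Gamma}} \left\{ h_{\nu}(\Gamma) + \int f \, \mathrm{d}\nu \right\} \ = \ \sup_{\mu\, \in\, \mathscr{M}_{\sigma^{\Gamma}} \left( \mathscr{P}^{\Gamma} \left( \widehat{\mathbb{C}} \right) \right)} \left\{ h_{\mu} \left( \sigma^{\Gamma} \right) + \int F \, \mathrm{d}\mu \right\} \ = \ {\rm Pr}_{\sigma^{\Gamma}}(F), \]
where the last equality is Walters' variational principle \eqref{eqn:varprT}.

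The second step, which I expect to be the main obstacle, is to show that ${\rm Pr}_{\sigma^{\Gamma}}(F) = {\rm Pr}(\Gamma, f)$ as given in Definition \ref{def:cgdpresent}. Here I would compare $(n,\epsilon)$-separated and spanning sets for $\sigma^{\Gamma}$ with respect to $\Delta$ against those in $\mathscr{P}_{n}^{\Gamma} \left( \widehat{\mathbb{C}} \right)$. The Birkhoff sum $\sum_{p=0}^{n-1} F \circ (\sigma^{\Gamma})^{p}$ evaluated at a path is exactly $\sum_{p=0}^{n-1} f(x^{(p)}_{j_p})$, which is the weight appearing in Definition \ref{def:cgdpresent}.

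For the distance comparison, fix $k$ with $2^{-k} < \epsilon$. Then $\Delta$-Bowen balls of radius $\epsilon$ up to time $n$ are controlled by closeness of the coordinates $x^{(0)}, \dots, x^{(n+k)}$ together with agreement of the symbols $\alpha_{1}, \dots, \alpha_{n+k}$. Conversely, $\Delta$-closeness up to time $n$ forces $d_{\widehat{\mathbb{C}}}$-closeness within $\epsilon$ and agreement of symbols for the first $n$ coordinates, provided $\epsilon < 1$.

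The argument then runs in two directions:
\begin{enumerate}
\item Every finite path of length $n$ extends to an infinite permissible path, since each $\pi_{1}|_{\Gamma_t}$ is surjective. Hence a separated family in $\mathscr{P}_{n}^{\Gamma}$ lifts to an $(n,\epsilon)$-separated set for $\sigma^{\Gamma}$ with the same weights. This gives ${\rm Pr}(\Gamma,f) \le {\rm Pr}_{\sigma^{\Gamma}}(F)$.
\item For the reverse inequality, an $(n,\epsilon)$-separated set for $\sigma^{\Gamma}$ projects to a family in $\mathscr{P}_{n+k}^{\Gamma}$ that is $(n+k,\epsilon/2^{k})$-separated. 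The weights differ by at most $e^{k\|f\|_\infty}$, which is harmless after dividing by $n$ and letting $n\to\infty$ then $\epsilon\to0$.
\end{enumerate}
Uniform continuity of $f$ handles the replacement of exact weights within $\epsilon$-balls. Combining both steps gives the theorem.
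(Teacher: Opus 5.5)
Your proposal is correct and is essentially the paper's proof: the paper also rewrites $h_{\mu}(\sigma^{\Gamma})$ as $h_{((\Pi_{0})_{*}\mu,\,\mu)}(\Gamma)$ via Theorem \ref{Entropy}, splits the supremum over the fibres of $(\Pi_{0})_{*}$, and reduces to the variational principle for $\sigma^{\Gamma}$ with potential $f\circ\Pi_{0}$. The only difference is that the paper simply cites Theorem 5.3 of \cite{ss:2025} for ${\rm Pr}(\Gamma,f)={\rm Pr}(\sigma^{\Gamma},f\circ\Pi_{0})$, whereas you sketch that comparison of separated sets yourself. Your sketch works up to harmless constants: you need $\epsilon<1/2$, since a mismatch in the first symbol contributes only $1/2$ to $\Delta$, and the lifted families are $(n,\epsilon/2)$-separated rather than $(n,\epsilon)$-separated, since $x^{(n)}$ enters the Bowen distance only with weight $1/2$.
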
 

\begin{proof} 
We know from Theorem 5.3 in \cite{ss:2025} that 
\[ {\rm Pr} (\Gamma, f)\ =\ {\rm Pr} \left( \sigma^{\Gamma}, f \circ \Pi_{0} \right)\ =\ \sup_{\mu\, \in\, \mathscr{M}_{\sigma^{\Gamma}} \left( \mathscr{P}^{\Gamma} \left( \widehat{\mathbb{C}} \right) \right)} \left\{ h_{\mu} \left( \sigma^{\Gamma} \right) + \int f \circ \Pi_{0} \mathrm{d}\mu \right\}. \] 
Hence, 
\begin{eqnarray*} 
{\rm Pr} (\Gamma, f) & = & \sup_{\mu\, \in\, \mathscr{M}_{\sigma^{\Gamma}} \left( \mathscr{P}^{\Gamma} \left( \widehat{\mathbb{C}} \right) \right)} \left\{ h_{\left( \left( \Pi_{0} \right)_{*} \mu,\, \mu \right)} (\Gamma) + \int f \mathrm{d}\left( \Pi_{0} \right)_{*} \mu \right\} \\ 
& = & \sup_{\nu\, \in\, \mathscr{S}^{\Gamma}}\; \sup_{\mu\; :\; \left( \Pi_{0} \right)_{*} \mu\, =\, \nu} \left\{ h_{\left( \nu,\, \mu \right)} (\Gamma) + \int f \mathrm{d}\nu \right\} \\ 
& = & \sup_{\nu\, \in\, \mathscr{S}^{\Gamma}} \left\{ h_{\nu} (\Gamma) + \int f \mathrm{d}\nu \right\}. 
\end{eqnarray*} 
\end{proof} 

\begin{corollary} 
The topological entropy of a holomorphic correspondence $\Gamma$, as written in Definition \ref{holo}, can be alternatively characterised as the supremum of all the measure-theoretic entropies of $\Gamma$, where the supremum is taken over the set $\mathscr{S}^{\Gamma}$, {\it i.e.}, 
\[ h_{{\rm top}} \left( \Gamma \right)\ \ =\ \ \sup \Big\{ h_{\nu} \left( \Gamma \right)\ :\ \nu\, \in\, \mathscr{S}^{\Gamma} \Big\}. \]
\end{corollary}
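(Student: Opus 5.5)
The corollary is the special case $f \equiv 0$ of Theorem \ref{thm:vphc}, and I would prove it simply by specialising that theorem. Take $f \equiv 0 \in \mathcal{C} \left( \widehat{\mathbb{C}}, \mathbb{R} \right)$. Definition \ref{def:cgdpresent} gives $h_{{\rm top}} \left( \Gamma \right) = {\rm Pr} (\Gamma, 0)$, so it suffices to compute ${\rm Pr} (\Gamma, 0)$ through the variational principle.

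Applying Theorem \ref{thm:vphc} with $f \equiv 0$ gives
\[ {\rm Pr} (\Gamma, 0)\ \ =\ \ \sup_{\nu\, \in\, \mathscr{S}^{\Gamma}} \left\{ h_{\nu} \left( \Gamma \right) + \int 0 \, \mathrm{d}\nu \right\}\ \ =\ \ \sup_{\nu\, \in\, \mathscr{S}^{\Gamma}} h_{\nu} \left( \Gamma \right). \]
The integral term vanishes identically because every $\nu \in \mathscr{S}^{\Gamma}$ is a probability measure. Combining this with $h_{{\rm top}} \left( \Gamma \right) = {\rm Pr} (\Gamma, 0)$ yields the claimed identity.

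There is no real obstacle; the only point to check is that the supremum over $\mathscr{S}^{\Gamma}$ is taken over a non-empty set. This holds because $\mathscr{M}_{\sigma^{\Gamma}} \left( \mathscr{P}^{\Gamma} \left( \widehat{\mathbb{C}} \right) \right)$ is non-empty, so $\mathscr{S}^{\Gamma}$ is non-empty as its image under $\left( \Pi_{0} \right)_{*}$. The identity is also consistent when $h_{{\rm top}} \left( \Gamma \right) = +\infty$, since both sides are then infinite by the same chain of equalities. As an alternative route, one could use Theorem 5.3 of \cite{ss:2025} with $f \equiv 0$ together with Theorem \ref{Entropy}. That would reproduce the same chain of suprema used in the proof of Theorem \ref{thm:vphc}, so it is unnecessary here.
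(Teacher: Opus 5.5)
Your proposal is correct and is exactly the intended argument: the paper states this corollary immediately after Theorem \ref{thm:vphc}, with no separate proof, as its specialisation to $f \equiv 0$ together with $h_{{\rm top}}(\Gamma) = {\rm Pr}(\Gamma, 0)$ from Definition \ref{def:cgdpresent}. One small point: $\int 0 \, \mathrm{d}\nu = 0$ holds for any measure, so you do not need $\nu$ to be a probability measure for the integral term to vanish.
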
 

\begin{theorem} 
\label{thm:usc} 
Let $\Gamma$ be the holomorphic correspondence on $\widehat{\mathbb{C}}$ as written in Definition \ref{holo} with $h_{{\rm top}} (\Gamma) < \infty$. Suppose $\nu \in \mathscr{S}^{\Gamma}$. Then, $\displaystyle{h_{\nu} (\Gamma) = \inf\limits_{f\, \in\, \mathcal{C} \left( \widehat{\mathbb{C}},\, \mathbb{R} \right)} \left\{ {\rm Pr} (\Gamma, f) - \int f\, \mathrm{d}\nu \right\}}$ {\it iff} the map $\nu \longmapsto h_{\nu} (\Gamma)$ is upper semicontinuous at $\nu$. 
\end{theorem}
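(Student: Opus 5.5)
The plan is to mimic the classical argument for maps (Walters, Theorem 9.12), using the variational principle of Theorem \ref{thm:vphc} and convex duality on the compact convex set $\mathscr{S}^{\Gamma}$ from the Lemma in Section \ref{sec:pfm}. Define $G(\nu) := \inf_{f} \{ {\rm Pr} (\Gamma, f) - \int f\, \mathrm{d}\nu \}$ for $\nu \in \mathscr{S}^{\Gamma}$. By Theorem \ref{thm:vphc}, for every $f$ we have ${\rm Pr}(\Gamma, f) \ge h_{\nu}(\Gamma) + \int f \mathrm{d}\nu$. Hence $h_{\nu}(\Gamma) \le G(\nu)$ always holds. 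As an infimum of the weak*-continuous affine maps $\nu \mapsto {\rm Pr}(\Gamma,f) - \int f \mathrm{d}\nu$, the function $G$ is upper semicontinuous on $\mathscr{S}^{\Gamma}$. It is also finite, since $h_{{\rm top}}(\Gamma) < \infty$ and ${\rm Pr}(\Gamma, 0) = h_{{\rm top}}(\Gamma)$.

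For the forward direction, suppose $h_{\nu}(\Gamma) = G(\nu)$ and let $\nu_k \to \nu$ in $\mathscr{S}^{\Gamma}$. Then
\[ \limsup_k h_{\nu_k}(\Gamma) \le \limsup_k G(\nu_k) \le G(\nu) = h_{\nu}(\Gamma), \]
which gives upper semicontinuity at $\nu$.

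For the converse, assume $\nu \mapsto h_{\nu}(\Gamma)$ is upper semicontinuous at $\nu_0$, and suppose for contradiction that $h_{\nu_0}(\Gamma) < b < G(\nu_0)$. I would use a Hahn--Banach separation in $\mathcal{C}(\widehat{\mathbb{C}},\mathbb{R})^{*} \times \mathbb{R}$. First I need concavity, or at least affineness, of $\nu \mapsto h_\nu(\Gamma)$ on $\mathscr{S}^{\Gamma}$. This should follow from the definition of $h_\nu(\Gamma)$ as a supremum of $h_{\mu}(\sigma^{\Gamma})$ over lifts $\mu$ (via Theorem \ref{Entropy}), combined with affineness of $\mu \mapsto h_{\mu}(\sigma^{\Gamma})$: a convex combination of lifts lifts the convex combination.

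Set $C = \{(\nu, t) : \nu \in \mathscr{S}^{\Gamma},\ 0 \le t \le h_\nu(\Gamma)\}$; by concavity $C$ is convex. Upper semicontinuity at $\nu_0$ shows that $(\nu_0, b)$ is not in the closure $\overline{C}$. Here I would use the standard device of replacing $h$ by its upper semicontinuous concave envelope $\bar h$, which agrees with $h$ at $\nu_0$ by assumption. Separating $(\nu_0,b)$ from $\overline{C}$ yields a continuous functional of the form $(\nu,t) \mapsto \int g\, \mathrm{d}\nu + c\,t$. After normalisation, one shows that the $t$-coefficient can be taken positive; this is the delicate point, handled as in Walters. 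We obtain $g$ with
\[ \int g\, \mathrm{d}\nu + h_\nu(\Gamma) < \int g\, \mathrm{d}\nu_0 + b \quad \text{for all } \nu \in \mathscr{S}^{\Gamma}. \]
Taking the supremum and applying Theorem \ref{thm:vphc} gives ${\rm Pr}(\Gamma, g) \le \int g \mathrm{d}\nu_0 + b$. Then $G(\nu_0) \le {\rm Pr}(\Gamma,g) - \int g \mathrm{d}\nu_0 \le b$, a contradiction.

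One caveat concerns functionals on measures on $\widehat{\mathbb{C}}$ that are not probability measures. Since the separation takes place inside $\mathscr{M}(\widehat{\mathbb{C}})$ with the weak* topology, and the dual of $\mathcal{C}(\widehat{\mathbb{C}},\mathbb{R})^*$ with the weak* topology is $\mathcal{C}(\widehat{\mathbb{C}},\mathbb{R})$, the separating functional is indeed given by integration against a continuous $g$.

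The main obstacle is the convexity and concavity input: showing that $\nu\mapsto h_\nu(\Gamma)$ is concave on $\mathscr{S}^{\Gamma}$, and correctly handling the supremum over lifts in upper semicontinuity arguments. If concavity fails directly, I would instead work upstairs. I would transfer the problem to $\mathscr{M}_{\sigma^\Gamma}$, where Walters' theorem applies to $h_\mu(\sigma^\Gamma)$ and $f\circ\Pi_0$. I would then push down using compactness of the fibres $\{\mu : (\Pi_0)_*\mu = \nu\}$, choosing near-optimal lifts and passing to weak* limits of them.
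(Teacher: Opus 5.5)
Your proposal is correct and follows essentially the same route as the paper. The forward direction is the variational-principle estimate; your remark that $G$ is an infimum of weak*-continuous affine functions is the paper's neighbourhood argument in compact form. The converse is the Walters-type Hahn--Banach separation of $(\nu_{0}, b)$ from the hypograph $\{(\nu, t) : 0 \le t \le h_{\nu}(\Gamma)\}$, followed by Theorem \ref{thm:vphc}; here $c > 0$ is immediate from $(\nu_{0}, h_{\nu_{0}}(\Gamma))$ lying in that set, so it is not actually a delicate point.

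Your version is more careful than the paper's on two counts. First, the paper asserts convexity of this set without proving concavity of $\nu \mapsto h_{\nu}(\Gamma)$, and your lifting argument via affinity of $\mu \mapsto h_{\mu}(\sigma^{\Gamma})$ supplies it. Second, the paper claims the set is closed, whereas upper semicontinuity at the single point $\nu_{0}$ only gives $(\nu_{0}, b) \notin \overline{C}$, which is exactly what you use.
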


One may observe that the statement of Theorem \ref{thm:usc}, written above and its proof written below follow the exact same lines as that of the classical case of dynamical systems determined by a continuous map defined on some compact space. 

\begin{proof} 
Suppose $\displaystyle{h_{\nu} (\Gamma) = \inf\limits_{f\, \in\, \mathcal{C} \left( \widehat{\mathbb{C}},\, \mathbb{R} \right)} \left\{ {\rm Pr} (\Gamma, f) - \int f\, \mathrm{d}\nu \right\}}$. Given $\epsilon > 0$, choose $g \in \mathcal{C} \left( \widehat{\mathbb{C}}, \mathbb{R} \right)$ such that $\displaystyle{{\rm Pr} (\Gamma, g) - \int g\, \mathrm{d}\nu < h_{\nu} (\Gamma) + \dfrac{\epsilon}{2}}$. Consider the neighbourhood around $\nu$ in $\mathscr{S}^{\Gamma}$ pertaining to the function $g$ prescribed by 
\[ V_{g} \left( \nu,\; \dfrac{\epsilon}{2} \right)\ \ =\ \ \left\{ \nu' \in \mathscr{S}^{\Gamma}\ :\ \left| \int g \mathrm{d}\nu - \int g \mathrm{d}\nu' \right|\ <\ \frac{\epsilon}{2} \right\}. \]
Then, by the variational principle for holomorphic correspondences, as written in Theorem \ref{thm:vphc}, we have 
\[ h_{\nu'} (\Gamma)\ \le\ {\rm Pr} (\Gamma, g)\; -\; \int g\, \mathrm{d}\nu'\ <\ {\rm Pr} (\Gamma, g)\; -\; \int g\, \mathrm{d}\nu\; +\; \frac{\epsilon}{2}\ <\ h_{\nu} (\Gamma)\; +\; \epsilon,\ \ \ \forall \nu' \in V_{g} \left( \nu,\; \dfrac{\epsilon}{2} \right). \] 
Thus the entropy map is upper semicontinuous at $\nu$. 

On the other hand, we now assume that the entropy map is upper semicontinuous at some $\nu \in \mathscr{S}^{\Gamma}$. Then, by the variational principle, we have $\displaystyle{h_{\nu} (\Gamma) \le \inf\limits_{f\, \in\, \mathcal{C} \left( \widehat{\mathbb{C}}, \mathbb{R} \right)} \left\{ {\rm Pr} (\Gamma, f) - \int f\, \mathrm{d}\nu \right\}}$. The proof is complete if $\displaystyle{h_{\nu} (\Gamma) \ge \inf\limits_{f\, \in\, \mathcal{C} \left( \widehat{\mathbb{C}}, \mathbb{R} \right)} \left\{ {\rm Pr} (\Gamma, f) - \int f\, \mathrm{d}\nu \right\}}$. Towards that end, consider a convex set $\mathcal{K} = \left\{ (\nu', u) \in \mathscr{S}^{\Gamma} \times \mathbb{R} : 0 \le u \le h_{\nu'} (\Gamma) \right\}$. Then, $(\nu, u') \notin \mathcal{K}$ whenever $u' > h_{\nu} (\Gamma)$. The upper semicontinuity of the entropy map and our definition of $\mathcal{K}$ ensures that $\mathcal{K}$ is closed. Hence, by an application of the Hahn-Banach separation theorem, we obtain the existence of a continuous linear functional, say $F : \mathcal{C} \left( \widehat{\mathbb{C}}, \mathbb{R} \right)^{*} \times \mathbb{R} \to \mathbb{R}$ such that $F \left( \nu', u \right) < F \left( \nu, u' \right)$ for every $\left( \nu', u \right) \in \mathcal{K}$. Since $\mathcal{C} \left( \widehat{\mathbb{C}}, \mathbb{R} \right)^{*}$ is equipped with the weak$^{*}$-topology, $F$ can be written as $\displaystyle{F \left( \nu', u \right) = \int f\, \mathrm{d}\nu' + cu}$, for some $f \in \mathcal{C} \left( \widehat{\mathbb{C}}, \mathbb{R} \right)$ and $c \in \mathbb{R}$. 

Thus, for all $\left( \nu', u \right) \in \mathcal{K}$, we have $\displaystyle{\int f\, \mathrm{d}\nu' + cu < \int f\, \mathrm{d}\nu + cu'}$ and in particular, 
\[ \int f\, \mathrm{d}\nu'\ +\ c h_{\nu'} (\Gamma)\ \ <\ \ \int f\, \mathrm{d}\nu\ +\ cu',\ \ \ \text{whenever}\ \nu' \in \mathscr{S}^{\Gamma}, \] 
which results in $c h_{\nu} (\Gamma) < cu'$, since $\nu \in \mathscr{S}^{\Gamma}$ and thus, $c > 0$. Then, 
\[ h_{\nu'} (\Gamma)\ \ <\ \ u' + \frac{1}{c} \int f\, \mathrm{d}\nu - \frac{1}{c} \int f\, \mathrm{d}\nu'. \] 
Further, by variational principle, we have $\displaystyle{{\rm Pr} \left( \Gamma, \frac{f}{c} \right) \le u' + \frac{1}{c} \int f\, \mathrm{d}\nu}$. Since this inequality holds for all $u' > h_{\nu} (\Gamma)$, we have 
\[ h_{\nu} (\Gamma) \ge \inf_{g\, \in\, \mathcal{C} \left( \widehat{\mathbb{C}}, \mathbb{R} \right)} \left\{ {\rm Pr} (\Gamma, g) - \int g\, \mathrm{d}\nu \right\}, \] 
thereby completing the proof. 
\end{proof}

\section{The Ruelle operator} 
\label{sec:ruop} 

We begin this section by recalling a result from \cite{bs:2016} for holomorphic correspondences $\Gamma$, as written in Definition \ref{holo}, that satisfy $d_{{\rm fwd}} < d_{{\rm top}}$. 

\begin{theorem}\cite{bs:2016} 
Let $\Gamma$ be a holomorphic correspondence on $\widehat{\mathbb{C}}$, as written in Definition \ref{holo} satisfying the condition $d_{{\rm top}} > d_{{\rm fwd}}$. Suppose $x_{0} \in \widehat{\mathbb{C}}\; \setminus\; \mathcal{E}$, where $\mathcal{E}$ is some polar set. Then, the sequence of measures $\displaystyle{\left\{ \dfrac{1}{\left( d_{{\rm top}} \right)^{n}} \sum\limits_{\begin{minipage}{3.6cm} \begin{center} \scriptsize{$x\, \in\, \widehat{\mathbb{C}}\; :$} \\ \scriptsize{$\Pi_{\left(n,\, n\right)}^{-} \left( \mathfrak{X}_{n}^{-} \left( x_{0}; \boldsymbol{\beta} \right)_{\boldsymbol{k}} \right)\ =\ x$} \end{center} \end{minipage}} \delta_{x} \right\}_{n\, \ge\, 1}}$ converges (in the weak* topology) to some measure, independent of $x_{0}$. 
\end{theorem}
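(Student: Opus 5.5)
This is the Dinh–Sibony equidistribution result for preimages under a correspondence with $d_{\rm top} > d_{\rm fwd}$, cited from \cite{bs:2016}. The plan is to follow the pluripotential-theoretic approach of Dinh and Sibony.

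\textbf{Step 1: the pull-back operator.} Work with the operator $\Gamma^{*}$ on $\mathcal{C}\left(\widehat{\mathbb{C}},\mathbb{R}\right)$ given by
\[ \left(\Gamma^{*}\varphi\right)(x_{0}) = \sum \varphi\left(y\right), \]
where the sum runs over the $d_{\rm top}$ backward images $y$ of $x_{0}$, counted with multiplicity. Its dual, applied to $\delta_{x_{0}}$, is exactly $\sum \delta_{x}$ over $x = \Pi_{(1,1)}^{-}$. Iterating, the $n$-th measure in the sequence equals $d_{\rm top}^{-n}\left(\Gamma^{*}\right)^{n}$ applied to $\delta_{x_{0}}$. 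So the statement reduces to convergence of $d_{\rm top}^{-n}\left(\Gamma^{*}\right)^{n}\varphi(x_{0})$ for smooth test functions $\varphi$, since smooth functions are dense.

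\textbf{Step 2: the candidate limit.} Construct the limit measure $\mu_{\Gamma}$ as follows. Let $\omega$ be the Fubini–Study form, normalised to mass $1$. Since $\Gamma^{*}$ multiplies mass by $d_{\rm top}$, the current $d_{\rm top}^{-1}\Gamma^{*}\omega - \omega$ has mass zero, so it equals $dd^{c}u$ for a quasi-potential $u$ that is bounded above. One then gets
\[ d_{\rm top}^{-n}\left(\Gamma^{*}\right)^{n}\omega = \omega + dd^{c}\sum_{k<n} d_{\rm top}^{-k}\left(\Gamma^{*}\right)^{k}u. \]
The key estimate is that $\left(\Gamma^{*}\right)^{k}$ grows by at most a factor $d_{\rm fwd}^{k}$ on the relevant norms of quasi-potentials. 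For example, $\Gamma_{*}$ preserves bounded DSH functions with norm growth $d_{\rm fwd}$. Hence the series converges, because $d_{\rm fwd}/d_{\rm top} < 1$, and this defines $\mu_{\Gamma}$ with $\Gamma^{*}\mu_{\Gamma} = d_{\rm top}\,\mu_{\Gamma}$. Write $\varphi - \int\varphi\, d\mu_{\Gamma} = dd^{c}$-type data. A duality argument then gives
\[ \left|d_{\rm top}^{-n}\left(\Gamma^{*}\right)^{n}\varphi - \int\varphi\, d\mu_{\Gamma}\right| \le C\left(d_{\rm fwd}/d_{\rm top}\right)^{n}\|\varphi\|_{\mathcal{C}^{2}} \]
in an $L^{1}$-type sense.

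\textbf{Step 3: the exceptional set.} Upgrade from convergence in $L^{1}$ (or convergence of currents) to pointwise convergence at $x_{0}$ off a polar set $\mathcal{E}$. The natural route goes through quasi-subharmonic potentials. The error terms are differences of $\omega$-subharmonic functions whose potentials $d_{\rm top}^{-n}\left(\Gamma^{*}\right)^{n}u$ converge to $0$ except on the set where they tend to $-\infty$. Using exponential decay together with a Borel–Cantelli argument in capacity, that set is contained in a polar set $\mathcal{E}$. Polarity follows since a countable union of sets of the form $\{v_{n} < -\delta n\}$, with $v_{n}$ uniformly bounded-above $\omega$-subharmonic functions of controlled mass, has zero logarithmic capacity in the limit. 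Independence of $x_{0}$ is automatic, since the limit is $\mu_{\Gamma}$.

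I expect Step 3 to be the main obstacle: obtaining pointwise control, and the precise description of $\mathcal{E}$, from the estimates available for quasi-potentials. Step 2 is comparatively routine once the inequality $d_{\rm fwd} < d_{\rm top}$ is used to make the geometric series converge.
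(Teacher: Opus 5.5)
The paper does not prove this statement; it quotes it from Bharali--Sridharan, who rely on Dinh--Sibony, so your argument has to stand on its own.

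Steps 1--2 have the right architecture, with one correction. The operator $L\varphi(x)=\sum_{(y,x)\in\Gamma}\varphi(y)$ of Step 1 satisfies $L^{k}1=d_{{\rm top}}^{k}$, so it is not the operator whose iterates grow like $d_{{\rm fwd}}^{k}$. Quasi-potentials transform under the pull-back $\Gamma^{*}$ of measures (the dual of $L$) by $\Gamma^{*}(dd^{c}u)=dd^{c}(Pu)$. Here $(Pu)(x)=\sum_{(x,y)\in\Gamma}u(y)$ is the sum over \emph{forward} images, and $\|P^{k}u\|_{\infty}\le d_{{\rm fwd}}^{k}\|u\|_{\infty}$. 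This needs $u$ bounded (it is in fact continuous), not merely bounded above. It then gives $\mu_{\Gamma}=\omega+dd^{c}U$ with $U=\sum_{k}d_{{\rm top}}^{-k}P^{k}u$ continuous, and $m_{n}:=\int d_{{\rm top}}^{-n}L^{n}\varphi\,\omega=\langle d_{{\rm top}}^{-n}(\Gamma^{*})^{n}\omega,\varphi\rangle\to\int\varphi\,d\mu_{\Gamma}$.

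The genuine gap is Step 3. It carries the whole content of the exceptional set, it is not actually proved, and the mechanism you sketch would not work, for three reasons.
\begin{enumerate}
\item The functions $d_{{\rm top}}^{-n}P^{n}u$ are uniformly bounded and tend to $0$ uniformly. They have no $-\infty$ locus and cannot produce $\mathcal{E}$.
\item An $L^{1}$ bound on $d_{{\rm top}}^{-n}L^{n}\varphi-\int\varphi\,d\mu_{\Gamma}$ says nothing at a given point.
\item Borel--Cantelli with the sets $\{v_{n}<-\delta n\}$ fails for logarithmic capacity. Capacity is subadditive only in the form $1/\log(1/c(\bigcup_{n}E_{n}))\le\sum_{n}1/\log(1/c(E_{n}))$. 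The bound $c(\{v_{n}<-\delta n\})\le Ce^{-\delta n}$ then leaves a divergent harmonic series, so nothing follows.
\end{enumerate}

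The missing idea is to dominate the pointwise error by potentials of the forward push-forwards of $\omega$.
\begin{itemize}
\item Put $\psi_{n}=d_{{\rm top}}^{-n}L^{n}\varphi$, so $\psi_{n}(x_{0})$ is the $n$-th measure tested on $\varphi$.
\item Since $dd^{c}(L\varphi)=\Gamma_{*}(dd^{c}\varphi)$, where $\Gamma_{*}$ pushes measures forward and multiplies mass by $d_{{\rm fwd}}$, a smooth $\varphi$ with $-C\omega\le dd^{c}\varphi\le C\omega$ gives $-C\rho_{n}\le dd^{c}\psi_{n}\le C\rho_{n}$. Here $\rho_{n}=d_{{\rm top}}^{-n}\Gamma_{*}^{n}\omega$ has mass $\theta^{n}$, with $\theta=d_{{\rm fwd}}/d_{{\rm top}}<1$.
\item With $\chi\le 1$ the chordal distance, let $w_{n}(z)=\int\log\chi(z,y)\,d\rho_{n}(y)\le 0$, so $dd^{c}w_{n}=\rho_{n}-\theta^{n}\omega$. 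Then $\pm\psi_{n}+Cw_{n}$ are $C\theta^{n}\omega$-subharmonic.
\item The Green representation on $\widehat{\mathbb{C}}$ gives $v(a)\le\int v\,\omega+K\epsilon$ for $v$ with $dd^{c}v\ge-\epsilon\omega$. Applied to $\pm\psi_{n}+Cw_{n}$, using $\int w_{n}\,\omega=-K\theta^{n}$, this yields $|\psi_{n}(x_{0})-m_{n}|\le C|w_{n}(x_{0})|$ for every $x_{0}$.
\item Because $w_{n}\le 0$ and $\int|w_{n}|\,\omega=O(\theta^{n})$, the decreasing sum $W=\sum_{n}w_{n}$ is quasi-subharmonic and not identically $-\infty$. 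So $\mathcal{E}:=\{W=-\infty\}$ is polar, and it depends only on $\Gamma$, not on $\varphi$.
\item For $x_{0}\notin\mathcal{E}$ we have $\sum_{n}|w_{n}(x_{0})|<\infty$, hence $w_{n}(x_{0})\to 0$.
\end{itemize}
Together with $m_{n}\to\int\varphi\,d\mu_{\Gamma}$ and the density of smooth test functions (all measures have mass one), this proves the theorem, with limit $\mu_{\Gamma}$ independent of $x_{0}$.
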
 

This limiting measure is called the \emph{Dinh-Sibony measure}, that we denote in this paper by $\omega_{{\rm DS}}$. The support of the measure $\omega_{{\rm DS}}$, denoted by $\Omega_{{\rm DS}}$ remains away from the normality set of $\Gamma$, as proved in \cite{bs:2016}. Further, $\Omega_{{\rm DS}}$ is a compact subset of $\widehat{\mathbb{C}}$ that is backward invariant under the action of $\Gamma$, {\it i.e.}, for any $x_{0} \in \Omega_{{\rm DS}}$, 
\begin{eqnarray*} 
\Omega_{{\rm DS}} & \supseteq & \bigcup_{n\, \ge\, 1} \left\{ x\, \in\, \widehat{\mathbb{C}}\; :\; \Pi_{\left(n,\, n\right)}^{-} \left( \mathfrak{X}_{n}^{-} \left( x_{0}; \boldsymbol{\beta} \right)_{\boldsymbol{k}} \right)\; =\; x \right\} \\ 
& = & \bigcup_{n\, \ge\, 1} \left\{ x\, \in\, \widehat{\mathbb{C}}\; :\; \Pi_{\left(n,\, n\right)}^{+} \left( \mathfrak{X}_{n}^{+} \left( x; \boldsymbol{\alpha} \right)_{\boldsymbol{j}} \right)\; =\; x_{0} \right\}. 
\end{eqnarray*} 

We intend to study the dynamics of $\Gamma$ restricted on $\Omega_{{\rm DS}}$; however, in order to ensure ``good behaviour" of forward iterates, we appeal to a result due to Londhe, that we now state. 

\begin{theorem}\cite{ml:2022} 
\label{mlt} 
Let $\Gamma$ be a holomorphic correspondence on $\widehat{\mathbb{C}}$ with $d_{{\rm top}} > d_{{\rm fwd}}$ and $\omega_{{\rm DS}}$ be its Dinh-Sibony measure, whose support is denoted by $\Omega_{{\rm DS}}$. If $x_{0} \in \Omega_{{\rm DS}}$, then there exists at least one $\boldsymbol{\alpha} \in \left\{ 1, 2, \cdots, M \right\}^{n}$ with corresponding value(s) of $\boldsymbol{j} \in \mathbb{Z}^{n}_{+}$ such that $\displaystyle{\Pi_{\left(n,\, n\right)}^{+} \left( \mathfrak{X}_{n}^{+} \left( x_{0}; \boldsymbol{\alpha} \right)_{\boldsymbol{j}} \right) \in \Omega_{{\rm DS}}}$, for all $n \in \mathbb{Z}_{+}$. 
\end{theorem}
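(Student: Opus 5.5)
The claim is that every point of $\Omega_{\rm DS}$ admits, for each $n$, a forward path of length $n$ whose endpoint lies in $\Omega_{\rm DS}$. The plan is to show that $\omega_{\rm DS}$ is invariant under the transfer operator of $\Gamma$ and to combine this with the fact that $x_0$ lies in the support, arguing by contradiction.

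\textbf{Step 1: invariance of the measure.} Write $\Gamma^{*}$ for the pull-back of measures, i.e. the map sending $\delta_{x_0}$ to the sum of Dirac masses at the backward $1$-path endpoints, normalised by $d_{\rm top}$. Applying $\Gamma^{*}$ to the approximating sequence in the theorem of \cite{bs:2016}, and using that $\Gamma^{*}$ is weak*-continuous (the preimage multiset varies continuously with $x_0$, with multiplicity), gives $\Gamma^{*}\omega_{\rm DS} = \omega_{\rm DS}$. Dually, for $\varphi \in \mathcal{C}(\widehat{\mathbb{C}},\mathbb{R})$ set
\[ (\Gamma_{*}\varphi)(x) = \frac{1}{d_{\rm top}} \sum_{(x,y)\in\Gamma} \varphi(y), \]
summing over forward images with multiplicity. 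Then invariance reads $\int \Gamma_{*}\varphi \,\mathrm{d}\omega_{\rm DS} = \int \varphi \,\mathrm{d}\omega_{\rm DS}$.

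\textbf{Step 2: the case $n=1$.} Suppose $x_0 \in \Omega_{\rm DS}$ but every forward image $y$ of $x_0$ lies outside the closed set $\Omega_{\rm DS}$. Since the forward images form finitely many points varying upper semicontinuously with $x$, there is a neighbourhood $W$ of $x_0$ all of whose forward images lie in an open set $V$ with $\overline{V} \cap \Omega_{\rm DS} = \emptyset$. Choose $\varphi \ge 0$ with $\varphi = 1$ on $\overline{V}$ and $\varphi = 0$ on $\Omega_{\rm DS}$. Then $\int \varphi\,\mathrm{d}\omega_{\rm DS} = 0$. On the other hand, $\Gamma_{*}\varphi \ge d_{\rm fwd}/d_{\rm top} > 0$ on $W$. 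Because $x_0 \in \operatorname{supp}\omega_{\rm DS}$, we have $\omega_{\rm DS}(W) > 0$, so $\int \Gamma_{*}\varphi\,\mathrm{d}\omega_{\rm DS} > 0$. This contradicts Step 1.

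\textbf{Step 3: induction on $n$.} From Step 2, pick $\alpha_1, j_1$ with $x^{(1)}_{j_1} \in \Omega_{\rm DS}$. Apply Step 2 again at $x^{(1)}_{j_1}$, and continue. Concatenating the choices yields, for every $n$, a word $\boldsymbol{\alpha}$ and indices $\boldsymbol{j}$ with $\Pi^{+}_{(n,n)}\left(\mathfrak{X}^{+}_{n}(x_0;\boldsymbol{\alpha})_{\boldsymbol{j}}\right) \in \Omega_{\rm DS}$. In fact this gives an infinite path staying in $\Omega_{\rm DS}$.

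\textbf{Main obstacle.} The delicate point is Step 1, namely making the pull-back weak*-continuous on all of $\widehat{\mathbb{C}}$, including at critical values where multiplicities collide. I would handle this by using that the total multiplicity $d_{\rm top}$ is constant everywhere when counted with multiplicity, as the paper's convention for ``set'' indicates. Then $x_0 \mapsto \sum \delta_{y}$ is continuous in the weak* sense, and the normalised iterates commute with $\Gamma^{*}$ in the limit.
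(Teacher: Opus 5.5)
Your overall strategy is sound: invariance of $\omega_{\rm DS}$ under the normalised pull-back, a support argument for $n=1$, then induction. Steps 1 and 3 are fine as written. However, Step 2 rests on a wrong dual formula.

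\textbf{The dual formula is wrong.} The adjoint of the pull-back is a sum over \emph{backward} images:
\[ \int \varphi \,\mathrm{d}\left(\Gamma^{*}\delta_{x}\right) = \frac{1}{d_{\rm top}} \sum_{(y,x)\in\Gamma} \varphi(y). \]
It is not a sum over forward images. With your forward-sum operator the displayed invariance is simply false. For $\varphi\equiv 1$ the left side is $d_{\rm fwd}/d_{\rm top}$ and the right side is $1$, and these differ precisely under the hypothesis $d_{\rm top}>d_{\rm fwd}$.

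This is not just a normalisation problem. Suppose any identity $\int \Phi\,\mathrm{d}\omega_{\rm DS} = c\int\varphi\,\mathrm{d}\omega_{\rm DS}$ held with $\Phi$ a forward sum. Localise $\varphi\ge 0$ near a single forward image of some $x_0\in\Omega_{\rm DS}$ lying outside $\Omega_{\rm DS}$. The same reasoning then forces $\Omega_{\rm DS}$ to be forward invariant in the strong sense that \emph{all} forward images stay inside. That fails in general. Take $\Gamma$ the sum of the graphs of $z\mapsto z^{2}$ and $z\mapsto 2z$. Then $d_{\rm top}=3$, $d_{\rm fwd}=2$, and $\Omega_{\rm DS}=\overline{\mathbb{D}}$, yet $2x\notin\Omega_{\rm DS}$ whenever $1/2<|x|\le 1$. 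So the contradiction in Step 2 is derived from a false identity and proves nothing.

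\textbf{The repair.} Use the correct adjoint and localise $\varphi$ at $x_0$ rather than on $V$.
\begin{enumerate}
\item Keep your $W$ and $V$, so every forward image of every point of $W$ lies in $V$, and $\overline{V}\cap\Omega_{\rm DS}=\emptyset$.
\item Take $\varphi\ge 0$ continuous with $\varphi(x_0)>0$ and support in $W$. Then $\int\varphi\,\mathrm{d}\omega_{\rm DS}>0$, because $x_0\in\Omega_{\rm DS}$.
\item Set $\Psi(x)=\frac{1}{d_{\rm top}}\sum_{(y,x)\in\Gamma}\varphi(y)$. It is nonzero only if some preimage $y$ of $x$ lies in $W$, i.e.\ only if $x$ is a forward image of a point of $W$, hence $x\in V$.
\item Since $\omega_{\rm DS}(V)=0$, we get $\int\Psi\,\mathrm{d}\omega_{\rm DS}=0$. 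This contradicts $\Gamma^{*}\omega_{\rm DS}=\omega_{\rm DS}$.
\end{enumerate}
Equivalently, $\operatorname{supp}(\Gamma^{*}\omega_{\rm DS})\subseteq\{x : \text{some forward image of } x \text{ lies in } \Omega_{\rm DS}\}$, and invariance places $\Omega_{\rm DS}$ inside this set.

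With this change the lower bound $d_{\rm fwd}/d_{\rm top}$ is no longer needed, and your Steps 1 and 3 carry through unchanged, even giving an infinite forward path inside $\Omega_{\rm DS}$. For reference, the paper gives no proof of this statement; it simply quotes Londhe's result.
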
 

Now using Theorem \ref{mlt}, we define a subset of $\mathscr{P}^{\Gamma} \left( \Omega_{{\rm DS}} \right)$ wherein we collect the forward images of a point $x_{0} \in \Omega_{{\rm DS}}$ that also remain in $\Omega_{{\rm DS}}$, {\it i.e.}, define 
\[ \mathscr{P}^{\Gamma}_{{\rm inv.}} \left( \Omega_{{\rm DS}} \right)\ \ =\ \ \left\{ \left( x_{0}, x^{(1)}_{j_{1}}, x^{(2)}_{j_{2}}, \cdots;\; \alpha_{1}, \alpha_{2}, \cdots \right)\; \in\; \mathscr{P}^{\Gamma} \left( \Omega_{{\rm DS}} \right)\ :\ x^{(r)}_{j_{r}}\; \in\; \Omega_{{\rm DS}}\  \forall r \in \mathbb{Z}_{+} \right\}. \] 

Note that, by definition any $\displaystyle{\left( x^{(r - 1)}_{j_{r - 1}}, x^{(r)}_{j_{r}} \right) \in \Gamma\vert_{\Omega_{{\rm DS}} \times \Omega_{{\rm DS}}}}$ for all $r \in \mathbb{Z}_{+}$. Further, $\mathscr{P}^{\Gamma}_{{\rm inv.}} \left( \Omega_{{\rm DS}} \right)$ is an automatic metric space with respect to the appropriate restriction of the metric $\Delta$, defined in Equation \eqref{eqn:infmet}. Moreover, $\mathscr{P}^{\Gamma}_{{\rm inv.}} \left( \Omega_{{\rm DS}} \right)$ is a closed subset of $\mathscr{P}^{\Gamma} \left( \widehat{\mathbb{C}} \right)$, and hence, compact. Also note that the restriction of the shift map $\sigma^{\Gamma}$, as defined in Equation \eqref{sigma}, on $\mathscr{P}^{\Gamma}_{{\rm inv.}} \left( \Omega_{{\rm DS}} \right)$ yields the map to be forward invariant. Since we know $\Omega_{{\rm DS}}$ to be backward invariant under the action of the correspondence $\Gamma$, we essentially have $\sigma^{\Gamma}$ to be completely invariant on $\mathscr{P}^{\Gamma}_{{\rm inv.}} \left( \Omega_{{\rm DS}} \right)$ {\it i.e.}, $\sigma^{\Gamma} \left(\mathscr{P}^{\Gamma}_{{\rm inv.}} \left( \Omega_{{\rm DS}} \right) \right) = \mathscr{P}^{\Gamma}_{{\rm inv.}} \left( \Omega_{{\rm DS}} \right)$. 

\begin{definition} 
\label{expcorr} 
A holomorphic correspondence $\Gamma$ defined on $\widehat{\mathbb{C}}$, as written in Definition \ref{holo}, is said to be an \emph{expansive correspondence on $\Omega_{{\rm DS}}$} if there exists a constant $\lambda > 1$ such that for any pair of points $x_{0}, y_{0} \in \Omega_{{\rm DS}}$ with $\displaystyle{\mathfrak{X}_{1}^{-} \left( x_{0}; \beta \right)_{j} \in \mathscr{Q}_{1}^{\Gamma} \left( \widehat{\mathbb{C}} \right)}$, there exists a point $\displaystyle{\mathfrak{X}_{1}^{-} \left( y_{0}; \beta \right)_{k} \in \mathscr{Q}_{1}^{\Gamma} \left( \widehat{\mathbb{C}} \right)}$ such that 
\[ \lambda\ d_{\widehat{\mathbb{C}}} \left( \Pi_{\left(1,\, 1\right)}^{-} \left( \mathfrak{X}_{1}^{-} \left( x_{0}; \beta \right)_{j} \right),\; \Pi_{\left(1,\, 1\right)}^{-} \left( \mathfrak{X}_{1}^{-} \left( y_{0}; \beta \right)_{k} \right) \right)\ \ <\ \ d_{\widehat{\mathbb{C}}} \left( x_{0}, y_{0} \right). \] 
\end{definition} 

In this final section, we shall consider holomorphic correspondences $\Gamma$ defined on $\widehat{\mathbb{C}}$, as written in Definition \ref{holo}, whose restriction on the support, $\Omega_{{\rm DS}}$ of its Dinh-Sibony measure, $\omega_{{\rm DS}}$ is expansive. 

Let $\mathcal{C} \left( \Omega_{{\rm DS}}, \mathbb{R} \right)$ denote the Banach space of real-valued continuous functions defined on the compact metric space $\Omega_{{\rm DS}}$, equipped with the supremum norm. For any fixed $f \in \mathcal{C} \left( \Omega_{{\rm DS}}, \mathbb{R} \right)$, consider the bounded linear Ruelle operator on $\mathcal{C} \left( \Omega_{{\rm DS}}, \mathbb{R} \right)$, as defined in \cite{ss:2025}, denoted by $\mathcal{L}_{f}$ whose action on the points in $\Omega_{{\rm DS}}$ is as prescribed below. 
\begin{equation} 
\label{eqn:ruellef}
\left( \mathcal{L}_{f} (g) \right) (x)\ \ =\ \ \sum_{1\, \le\, \beta\, \le\, M} \sum\limits_{\begin{minipage}{3.25cm} \begin{center} \scriptsize{$1\, \le\, k\, \le\, \delta_{\beta} (x)$} \\ \scriptsize{$\Pi_{\left(1,\, 1\right)}^{-} \left( \mathfrak{X}_{1}^{-} \left( x; \beta \right)_{k} \right)\, =\, y$} \end{center} \end{minipage}} e^{f(y)} g(y). 
\end{equation} 

Let $\mathcal{L}_{f}^{*}$ denote the adjoint operator, corresponding to the Ruelle operator $\mathcal{L}_{f}$, defined on the dual space of $\mathcal{C} \left( \Omega_{{\rm DS}}, \mathbb{R} \right)$, namely the space of all signed measures supported on $\Omega_{{\rm DS}}$, denoted by $\mathcal{C} \left( \Omega_{{\rm DS}}, \mathbb{R} \right)^{*}$. For every signed measure $\rho \in \mathcal{C} \left( \Omega_{{\rm DS}}, \mathbb{R} \right)^{*}$, the action of the measure $\mathcal{L}_{f}^{*} (\rho)$ on any $g \in \mathcal{C} \left( \Omega_{{\rm DS}}, \mathbb{R} \right)$ is given by 
\[ \left[ \mathcal{L}_{f}^{*} (\rho) \right] (g)\ \ =\ \ \rho \left( \mathcal{L}_{f} (g) \right)\ \ =\ \ \int_{\Omega_{{\rm DS}}} \left( \mathcal{L}_{f} g \right) \mathrm{d} \rho. \] 
Observe that the set of probability measures supported on $\Omega_{{\rm DS}}$, denoted by $\mathscr{M} \left( \Omega_{{\rm DS}} \right)$ is a subset of $\mathcal{C} \left( \Omega_{{\rm DS}}, \mathbb{R} \right)^{*}$. For every $f \in \mathcal{C} \left( \Omega_{{\rm DS}}, \mathbb{R} \right)$ and $k \in \mathbb{Z}_{+}$, let 
\begin{equation} 
\label{omegak} 
\omega_{k} (f)\ \ =\ \ \sup \left\{ \left| f(x) - f(y) \right|\ :\ d_{\widehat{\mathbb{C}}} (x, y) \le \frac{1}{\lambda^{k - 1}} \right\} 
\end{equation} 
and consider the subset $\mathcal{C}^{\alpha} \left( \Omega_{{\rm DS}}, \mathbb{R} \right) \subset \mathcal{C} \left( \Omega_{{\rm DS}}, \mathbb{R} \right)$ where $\alpha = \lambda^{-1}$ given by  
\[ \mathcal{C}^{\alpha} \left( \Omega_{{\rm DS}}, \mathbb{R} \right)\ \ =\ \ \left\{ f \in \mathcal{C} \left( \Omega_{{\rm DS}}, \mathbb{R} \right) : \sum_{k\, \ge\, 1} \omega_{k} (f) + \| f \|_{\infty} < \infty \right\}. \] 
Then, it is a well-known fact that $\mathcal{C}^{\alpha} \left( \Omega_{{\rm DS}}, \mathbb{R} \right)$ is a Banach space with respect to the norm, 
\[ \| f \|_{\alpha}\ \ =\ \ \sum_{k\, \ge\, 1} \omega_{k} (f) + \| f \|_{\infty}. \] 

Note that $F = f \circ \Pi_{0} \in  \mathcal{C} \left( \mathscr{P}^{\Gamma}_{{\rm inv.}} \left( \Omega_{{\rm DS}} \right), \mathbb{R} \right)$, whenever $f \in \mathcal{C} \left( \Omega_{{\rm DS}}, \mathbb{R} \right)$. Suppose $\mathcal{L}_{F}$ denotes the classical Ruelle operator on $\mathcal{C} \left( \mathscr{P}^{\Gamma}_{{\rm inv.}} \left( \Omega_{{\rm DS}} \right), \mathbb{R} \right)$, where the dynamics in the underlying space happens through the shift map $\sigma^{\Gamma}$, then, for any $G \in \mathcal{C} \left( \mathscr{P}^{\Gamma}_{{\rm inv.}} \left( \Omega_{{\rm DS}} \right), \mathbb{R} \right)$, the action of the classical Ruelle operator is given by 
\begin{equation} 
\label{eqn:ruelleF}
\mathcal{L}_{F} (G) \left( \mathfrak{X}^{+} \left( x_{0}; \boldsymbol{\alpha} \right)_{\boldsymbol{j}} \right)\ \ = \hspace{-10pt} \sum\limits_{\begin{minipage}{5cm} \begin{center} \scriptsize{\begin{eqnarray*} \mathfrak{X}^{+} \left( x; \boldsymbol{\beta} \right)_{\boldsymbol{k}} & \hspace{-5pt} \in \hspace{-5pt} & \mathscr{P}^{\Gamma}_{{\rm inv.}} \left( \Omega_{{\rm DS}} \right) : \\ \sigma^{\Gamma} \left( \mathfrak{X}^{+} \left( x; \boldsymbol{\beta} \right)_{\boldsymbol{k}} \right) & \hspace{-5pt} = \hspace{-5pt} & \mathfrak{X}^{+} \left( x_{0}; \boldsymbol{\alpha} \right)_{\boldsymbol{j}} \end{eqnarray*}} \end{center} \end{minipage}} e^{F \left( \mathfrak{X}^{+} \left( x; \boldsymbol{\beta} \right)_{\boldsymbol{k}} \right)} G \left( \mathfrak{X}^{+} \left( x; \boldsymbol{\beta} \right)_{\boldsymbol{k}} \right). 
\end{equation} 
Then, it is clear that $\boldsymbol{k} = \left( j_{0}, j_{1}, j_{2}, \cdots \right) = j_{0} \boldsymbol{j}$ if $\boldsymbol{j} = \left( j_{1}, j_{2}, \cdots \right)$ and $\boldsymbol{\beta} = \left( \alpha_{0}, \alpha_{1}, \alpha_{2}, \cdots \right) = \alpha_{0} \boldsymbol{\alpha}$ if $\boldsymbol{\alpha} = \left( \alpha_{1}, \alpha_{2}, \cdots \right)$. Thus, 
\[ \mathcal{L}_{F} (G) \left( \mathfrak{X}^{+} \left( x_{0}; \boldsymbol{\alpha} \right)_{\boldsymbol{j}} \right)\ \ =\ \ \sum_{\alpha_{0}\, =\, 1}^{M} \sum\limits_{\begin{minipage}{4cm} \begin{center} \scriptsize{$1\, \le\, j_{0}\, \le\, \lambda_{\alpha_{0}} (x)$} : \\ \scriptsize{$\Pi_{1} \left( \mathfrak{X}^{+} \left( x; \alpha_{0} \boldsymbol{\alpha} \right)_{j_{0} \boldsymbol{j}} \right)\, =\, x_{0}$} \end{center} \end{minipage}} e^{F \left( \mathfrak{X}^{+} \left( x; \alpha_{0} \boldsymbol{\alpha} \right)_{j_{0} \boldsymbol{j}} \right)} G \left( \mathfrak{X}^{+} \left( x; \alpha_{0} \boldsymbol{\alpha} \right)_{j_{0} \boldsymbol{j}} \right). \] 

We now state and prove a theorem, concerning the existence of a unique measure in $\mathscr{S}^{\Gamma}$, corresponding to the adjoint of the Ruelle operator $\mathcal{L}_{f}$ for some $f \in \mathcal{C}^{\alpha} \left( \Omega_{{\rm DS}}, \mathbb{R} \right)$, as given in Equation \eqref{eqn:ruellef}, defined on $\mathcal{C} \left( \Omega_{{\rm DS}}, \mathbb{R} \right)$. 

\begin{theorem} 
\label{rotm} 
Let $\Gamma$ be a holomorphic correspondence on $\widehat{\mathbb{C}}$, as written in Definition \ref{holo}, for which $d_{{\rm top}} > d_{{\rm fwd}}$. Let $\omega_{{\rm DS}}$ be its Dinh-Sibony measure, whose support is denoted by $\Omega_{{\rm DS}}$. Assume that $\Gamma$ is expansive on $\Omega_{{\rm DS}}$. For the Ruelle operator $\mathcal{L}_{f}$ corresponding to a function $f \in \mathcal{C}^{\alpha} \left( \Omega_{{\rm DS}}, \mathbb{R} \right)$, as given in Equation \eqref{eqn:ruellef}, defined on $\mathcal{C} \left( \Omega_{{\rm DS}}, \mathbb{R} \right)$, there exists a unique measure $\nu \in \mathscr{S}^{\Gamma}$ such that for any $g \in \mathcal{C} \left( \Omega_{{\rm DS}}, \mathbb{R} \right)$, we have the sequence $\displaystyle{\left\{ \dfrac{1}{\Lambda^{n}} \mathcal{L}_{f}^{n} (g) \right\}_{n\, \ge\, 1} \to h \int \frac{g}{h} \mathrm{d}\nu}$ where $h \in \mathcal{C} \left( \Omega_{{\rm DS}}, \mathbb{R} \right)$ is the eigenfunction corresponding to the maximal simple positive eigenvalue $\Lambda$ of $\mathcal{L}_{f}$. 
\end{theorem}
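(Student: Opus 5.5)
This is a Ruelle--Perron--Frobenius argument for $\mathcal{L}_{f}$ on $\Omega_{\rm DS}$. We use expansiveness (Definition \ref{expcorr}) in place of uniform contraction of inverse branches, and then transfer the resulting eigenmeasure to the path space $\mathscr{P}^{\Gamma}_{\rm inv.}(\Omega_{\rm DS})$ to see that the relevant measure lies in $\mathscr{S}^{\Gamma}$. The proof has three parts.

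\emph{Step 1: eigenmeasure and eigenvalue.} Positivity of $\mathcal{L}_{f}$ (every $x \in \Omega_{\rm DS}$ has at least one preimage in $\Omega_{\rm DS}$, by backward invariance) lets us define a map on $\mathscr{M}(\Omega_{\rm DS})$ by
\[ \rho \longmapsto \frac{\mathcal{L}_{f}^{*}\rho}{(\mathcal{L}_{f}^{*}\rho)(1)}. \]
This map is continuous on a compact convex set, so the Schauder--Tychonoff theorem gives a probability measure $m$ with $\mathcal{L}_{f}^{*} m = \Lambda m$, where $\Lambda = (\mathcal{L}_{f}^{*}m)(1) > 0$.

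\emph{Step 2: eigenfunction.} We iterate inverse branches via Definition \ref{expcorr}. For $x_{0},y_{0}$ close, each backward path from $x_{0}$ of length $n$ is paired with a backward path from $y_{0}$ carrying the same symbol word $\boldsymbol{\beta}$, and the $r$-th points of the two paths are within $\lambda^{-r}d(x_{0},y_{0})$. Since $f \in \mathcal{C}^{\alpha}$, summing $\omega_{k}(f)$ gives a bounded distortion estimate
\[ \frac{\mathcal{L}_{f}^{n}1(x_{0})}{\mathcal{L}_{f}^{n}1(y_{0})} \le \exp\Big(\sum_{k\ge1}\omega_{k}(f)\Big). \]
Hence $\Lambda^{-n}\mathcal{L}_{f}^{n}1$ is uniformly bounded above and below, because $\int \Lambda^{-n}\mathcal{L}_{f}^{n}1\,\mathrm{d}m = 1$. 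It is also equicontinuous, lying in a cone $\{g>0 : g(x)\le e^{B\,\omega(d(x,y))}g(y)\}$. The Ces\`aro averages of $\Lambda^{-n}\mathcal{L}_{f}^{n}1$ therefore have a uniform limit $h>0$ with $\mathcal{L}_{f}h = \Lambda h$, and we normalise so that $\int h\,\mathrm{d}m=1$. We then set $\nu = hm$, and the claimed limit reads $h\int (g/h)\,\mathrm{d}\nu = h\int g\,\mathrm{d}m$.

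\emph{Step 3: convergence and membership in $\mathscr{S}^{\Gamma}$.} We pass to the normalised operator
\[ \mathcal{M}g = \frac{\mathcal{L}_{f}(hg)}{\Lambda h}, \qquad \mathcal{M}1=1, \qquad \mathcal{M}^{*}\nu=\nu. \]
For $g$ in the cone, $\mathcal{M}^{n}g$ is equicontinuous, $\max\mathcal{M}^{n}g$ is non-increasing and $\min\mathcal{M}^{n}g$ is non-decreasing. A limit point $g^{*}$ of the iterates satisfies $\max \mathcal{M}g^{*}=\max g^{*}$. Topological exactness, which we derive from expansiveness together with the fact that the backward orbit of any point is dense in $\Omega_{\rm DS}$ (the Dinh--Sibony equidistribution theorem of \cite{bs:2016}), then forces $g^{*}$ to be constant. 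That constant equals $\int g\,\mathrm{d}\nu$, so $\mathcal{M}^{n}g\to\int g\,\mathrm{d}\nu$ uniformly. This extends to all continuous $g$ by density of $\mathcal{C}^{\alpha}$ and the uniform bound $\|\mathcal{M}^{n}\|=1$. Undoing the normalisation gives $\Lambda^{-n}\mathcal{L}_{f}^{n}(g)\to h\int(g/h)\,\mathrm{d}\nu$. Uniqueness follows because any $\nu'$ with the same property integrates each $g$ to the same value.

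To show $\nu \in \mathscr{S}^{\Gamma}$, we lift it. Since $F=f\circ\Pi_{0}$, the operators satisfy $\mathcal{L}_{F}(G\circ\Pi_{0}) = (\mathcal{L}_{f}G)\circ\Pi_{0}$ up to the identification of preimages in \eqref{eqn:ruelleF}. We consider the classical Gibbs measure for $\mathcal{L}_{F}$ on $\mathscr{P}^{\Gamma}_{\rm inv.}(\Omega_{\rm DS})$, or alternatively construct $\mu$ by Kolmogorov extension from the cylinder weights
\[ h(x_{0})\,\frac{e^{f(x_{0})}}{\Lambda h(x_{1})}\cdots \, m\text{-type} \]
obtained along the normalised transition kernel $\mathcal{M}$. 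Because $\mathcal{M}^{*}\nu=\nu$, this $\mu$ is $\sigma^{\Gamma}$-invariant and $(\Pi_{0})_{*}\mu=\nu$.

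The main obstacle is making the distortion and exactness arguments rigorous at points of $\Omega_{\rm DS}$ with branching or multiplicity, where inverse branches are not single-valued. My plan is to work throughout with the labelled backward paths $\mathfrak{X}^{-}_{n}$, so that multiplicities are counted consistently, and to pair paths path-by-path as in Definition \ref{expcorr}.
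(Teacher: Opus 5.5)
Your proposal is correct in outline, but it takes a genuinely different route from the paper.

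\textbf{How the paper proves it.} The paper does no Ruelle--Perron--Frobenius analysis for this theorem. It imports from \cite{ss:2025}, as Theorem~\ref{rot}, three things: the eigenpair $(\Lambda, h)$; the normalised path-space operator $\mathcal{L}_{\widetilde{F}}$ with $\mathcal{L}_{\widetilde{F}}^{n} G \to c(G)$; and the convergence $\Lambda^{-n} \mathcal{L}_{f}^{n} g \to c\bigl( \frac{g}{h} \circ \Pi_{0} \bigr) h$. Its only new step is to realise the functional $c$ as a measure. A Schauder--Tychonoff fixed point $\mu_{0}$ of $\mathcal{L}_{\widetilde{F}}^{*}$ satisfies $\int G \, \mathrm{d}\mu_{0} = c(G)$ by statement (3). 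It is $\sigma^{\Gamma}$-invariant because $\mathcal{L}_{\widetilde{F}} (G \circ \sigma^{\Gamma}) = G$. So $\nu = (\Pi_{0})_{*} \mu_{0}$ lies in $\mathscr{S}^{\Gamma}$ by construction.

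\textbf{How yours differs.} You build the RPF theory on $\Omega_{{\rm DS}}$ itself, set $\nu = hm$, and construct the lift afterwards. That lift is sound. Consistency of your cylinder weights is exactly $\mathcal{M}^{*} \nu = \nu$, and shift-invariance is exactly $\mathcal{M} \mathbb{1} = \mathbb{1}$; you need both, not only the first. Your backward weight $e^{f(x_{0})} h(x_{0}) / (\Lambda h(x_{1}))$ is precisely $e^{\widetilde{F}}$. Hence your $\mu$ is a fixed point of $\mathcal{L}_{\widetilde{F}}^{*}$, i.e.\ the paper's $\mu_{0}$, and the two measures $\nu$ coincide. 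The paper's argument is short but rests entirely on the imported convergence results. Yours is self-contained and identifies $\nu$ explicitly as eigenfunction times $\mathcal{L}_{f}^{*}$-eigenmeasure. The cost is that you must carry out distortion and exactness for a multivalued inverse, which the paper delegates to \cite{ss:2025}.

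\textbf{Two steps that need more than you wrote.}

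First, distortion. Definition~\ref{expcorr} only supplies, for each backward branch at $x_{0}$, \emph{some} partner branch at $y_{0}$. This assignment of labelled paths need not be injective, so it does not by itself bound $\mathcal{L}_{f}^{n} \mathbb{1} (x_{0}) / \mathcal{L}_{f}^{n} \mathbb{1} (y_{0})$. One way through is to apply the definition to the pair $(y_{0}, x_{0})$, which rules out ramification of $\pi_{2}|_{\Gamma_{\beta}}$ over points of $\Omega_{{\rm DS}}$. Indeed, near a ramified preimage $c$ of $x_{0}$, the preimages of a nearby $y_{0} \in \Omega_{{\rm DS}}$ sit at distance of order $d_{\widehat{\mathbb{C}}}(x_{0}, y_{0})^{1/e}$ from $c$, with $e \ge 2$. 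So they have no partner within $d_{\widehat{\mathbb{C}}}(x_{0}, y_{0})/\lambda$. From this you get univalent contracting inverse branches on balls of uniform radius, and hence a bijective pairing for nearby points. Passing from that local ratio bound to global two-sided bounds on $\Lambda^{-n} \mathcal{L}_{f}^{n} \mathbb{1}$ then also needs exactness, not only $\int \Lambda^{-n} \mathcal{L}_{f}^{n} \mathbb{1} \, \mathrm{d}m = 1$.

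Second, exactness. The equidistribution theorem of \cite{bs:2016} gives dense backward orbits only for $x_{0} \notin \mathcal{E}$, not for every point. You do not need more. The max-argument only requires the $n$-th backward image sets to become $\epsilon$-dense uniformly in the base point. To get this, fix one $z \in \Omega_{{\rm DS}} \setminus \mathcal{E}$, which is nonempty since $\omega_{{\rm DS}}$ puts no mass on polar sets. Then iterate Definition~\ref{expcorr} to shadow every $n$-step backward path from $z$ by one from an arbitrary $x \in \Omega_{{\rm DS}}$, within distance $\lambda^{-n} \, {\rm diam} \, \widehat{\mathbb{C}}$.
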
 

Before we begin the proof of this theorem, we state the following theorem due to the authors, that is an amalgamation of various results concerning the Ruelle operator, as one may find in \cite{ss:2025}. 

\begin{theorem} 
\label{rot} 
Let $\Gamma$ be a holomorphic correspondence on $\widehat{\mathbb{C}}$, as written in Definition \ref{holo}, for which $d_{{\rm top}} > d_{{\rm fwd}}$. Let $\omega_{{\rm DS}}$ be its Dinh-Sibony measure, whose support is denoted by $\Omega_{{\rm DS}}$. Assume that $\Gamma$ is expansive on $\Omega_{{\rm DS}}$. Consider the Ruelle operator $\mathcal{L}_{f}$ corresponding to a function $f \in \mathcal{C}^{\alpha} \left( \Omega_{{\rm DS}}, \mathbb{R} \right)$, as given in Equation \eqref{eqn:ruellef}, defined on $\mathcal{C} \left( \Omega_{{\rm DS}}, \mathbb{R} \right)$. For $F = f \circ \Pi_{0}$, consider the counterpart of $\mathcal{L}_{f}$ given by $\mathcal{L}_{F}$, as given in Equation \eqref{eqn:ruelleF}, defined on $\mathcal{C} \left( \mathscr{P}^{\Gamma}_{{\rm inv.}} \left( \Omega_{{\rm DS}} \right), \mathbb{R} \right)$. Then, the following statements hold. 
\begin{enumerate} 
\item There exists a simple, positive, maximal eigenvalue, say $\Lambda$ of the Ruelle operators $\mathcal{L}_{f}$ and $\mathcal{L}_{F}$ with corresponding positive eigenfunctions, say $h \in \mathcal{C} \left( \Omega_{{\rm DS}}, \mathbb{R} \right)$ and $H = h \circ \Pi_{0} \in \mathcal{C} \left( \mathscr{P}^{\Gamma}_{{\rm inv.}} \left( \Omega_{{\rm DS}} \right), \mathbb{R} \right)$ respectively. 
\item Suppose we define $\widetilde{F} = F - \log \left( H \circ \sigma^{\Gamma} \right) + \log H - \log \Lambda$, then the Ruelle operator pertaining to $\widetilde{F}$ is normalised, {\it i.e.}, it satisfies $\mathcal{L}_{\widetilde{F}} \mathbb{1} = \mathbb{1}$, where $\mathbb{1} \in \mathcal{C} \left( \mathscr{P}^{\Gamma}_{{\rm inv.}} \left( \Omega_{{\rm DS}} \right), \mathbb{R} \right)$ denotes the constant function evaluating to unity. 
\item For any $n \in \mathbb{Z}_{+}$, let $\mathcal{L}_{F}^{n}$ and $\mathcal{L}_{\widetilde{F}}^{n}$ denote the $n$-th iterate of the Ruelle operator $\mathcal{L}_{F}$ and the normalised Ruelle operator $\mathcal{L}_{\widetilde{F}}$ respectively. Then, the sequence $\displaystyle{\left\{ \mathcal{L}_{\widetilde{F}}^{n} G \right\}_{n\, \ge\, 1}}$ converges to a constant, say $c(G)$, for every continuous function $G \in \mathcal{C} \left( \mathscr{P}^{\Gamma}_{{\rm inv.}} \left( \Omega_{{\rm DS}} \right), \mathbb{R} \right)$. And thus, the sequence $\displaystyle{\left\{ \frac{1}{\Lambda^{n}} \mathcal{L}_{F}^{n} G \right\}_{n\, \ge\, 1}}$ converges to the function $\displaystyle{c \left( \frac{G}{H} \right) H}$. 
\item For any $n \in \mathbb{Z}_{+}$, if $\mathcal{L}_{f}^{n}$ denotes the $n$-th iterate of the Ruelle operator $\mathcal{L}_{f}$, then the sequence $\left\{ \dfrac{1}{\Lambda^{n}} \mathcal{L}_{f}^{n} g \right\}_{n\, \ge\, 1}$ converges uniformly in $\Omega_{{\rm DS}}$ to the function $c \left( \dfrac{g}{h} \circ \Pi_{0} \right) h$, for every $g \in \mathcal{C} \left( \Omega_{{\rm DS}}, \mathbb{R} \right)$. 
\end{enumerate} 
\end{theorem}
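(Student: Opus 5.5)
The strategy is to lift everything to the shift space $\mathscr{P}^{\Gamma}_{{\rm inv.}} \left( \Omega_{{\rm DS}} \right)$, where $\mathcal{L}_{F}$ is a classical Ruelle operator for the continuous surjection $\sigma^{\Gamma}$, and then push the conclusions down to $\Omega_{{\rm DS}}$ through $\Pi_{0}$. Everything rests on the intertwining identity
\[ \mathcal{L}_{F} \left( g \circ \Pi_{0} \right)\ =\ \left( \mathcal{L}_{f}\, g \right) \circ \Pi_{0}, \qquad g \in \mathcal{C} \left( \Omega_{{\rm DS}}, \mathbb{R} \right). \]
The $\sigma^{\Gamma}$-preimages of $\mathfrak{X}^{+} \left( x_{0}; \boldsymbol{\alpha} \right)_{\boldsymbol{j}}$ are obtained by prefixing a one-step backward branch $(\beta, k)$ at $x_{0}$. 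Backward invariance of $\Omega_{{\rm DS}}$ guarantees that every such prefixed path still lies in $\mathscr{P}^{\Gamma}_{{\rm inv.}} \left( \Omega_{{\rm DS}} \right)$. Since $F = f \circ \Pi_{0}$ only sees the new base point, the sum defining $\mathcal{L}_{F}$ in Equation \eqref{eqn:ruelleF} is term-by-term the sum defining $\mathcal{L}_{f}$ in Equation \eqref{eqn:ruellef}. Iterating gives the same identity for all $n$. By Theorem \ref{mlt}, $\Pi_{0}$ maps $\mathscr{P}^{\Gamma}_{{\rm inv.}} \left( \Omega_{{\rm DS}} \right)$ onto $\Omega_{{\rm DS}}$, so $\| u \circ \Pi_{0} \|_{\infty} = \| u \|_{\infty}$ for every $u$, and statements transfer in both directions.

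For item 1, I would first obtain $\Lambda$ and a positive eigenfunction of $\mathcal{L}_{f}$.
\begin{enumerate}
\item Apply Schauder--Tychonoff to $\rho \mapsto \mathcal{L}_{f}^{*} \rho / \rho \left( \mathcal{L}_{f} \mathbb{1} \right)$ on $\mathscr{M} \left( \Omega_{{\rm DS}} \right)$. This yields $\rho_{0}$ with $\mathcal{L}_{f}^{*} \rho_{0} = \Lambda \rho_{0}$.
\item Build $h$ on the convex set of functions $u \ge 0$ with $\int u\, \mathrm{d}\rho_{0} = 1$ and $u(x) \le e^{B_{k}} u(y)$ whenever $d_{\widehat{\mathbb{C}}}(x, y) \le \lambda^{-(k-1)}$, where $B_{k} = \sum_{l \ge k} \omega_{l}(f)$.
\item Show $\Lambda^{-1} \mathcal{L}_{f}$ preserves this set. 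Expansiveness (Definition \ref{expcorr}) pairs each inverse branch at $x$ with one at $y$ lying $\lambda$ times closer, so the weights $e^{f}$ along paired branches differ by at most $e^{\omega_{k+1}(f)}$. Summability of $\omega_{k}(f)$, i.e. $f \in \mathcal{C}^{\alpha}$, closes the estimate.
\item Arzelà--Ascoli and a second fixed-point argument give $h > 0$ with $\mathcal{L}_{f} h = \Lambda h$. The intertwining identity then gives $\mathcal{L}_{F} H = \Lambda H$ for $H = h \circ \Pi_{0}$.
\end{enumerate}
Simplicity and maximality of $\Lambda$ are deferred to the convergence in item 3: any other nonnegative eigenvector of $\mathcal{L}_{F}$ would have to be a multiple of $H$.

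Item 2 is a direct computation. Writing $\mathfrak{X}' = \mathfrak{X}^{+} \left( x; \alpha_{0} \boldsymbol{\alpha} \right)_{j_{0} \boldsymbol{j}}$ for a preimage, one has $H \left( \sigma^{\Gamma} \mathfrak{X}' \right) = h(x_{0})$ for every preimage. Hence
\[ \mathcal{L}_{\widetilde{F}} \mathbb{1}\ =\ \frac{\mathcal{L}_{F} H}{\Lambda \left( h \circ \Pi_{0} \right)}\ =\ \mathbb{1}. \]

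Item 3 is the main obstacle. The plan is to show, for $G$ in a dense class (functions depending on finitely many coordinates, Lipschitz in $\Delta$), two things:
\begin{enumerate}
\item The family $\left\{ \mathcal{L}_{\widetilde{F}}^{n} G \right\}$ is uniformly bounded by $\| G \|_{\infty}$ (normalisation). It is equicontinuous, because inverse branches of $\sigma^{\Gamma}$ with a common symbol string contract $\Delta$ by at least $\min \left( \lambda^{-1}, 1/2 \right)$; this uses the pairing from the expansiveness hypothesis together with the shift structure of $\Delta$.
\item Every limit point is a constant. Take a limit point $u$ along a subsequence; $\max u$ and $\min u$ are non-increasing under $\mathcal{L}_{\widetilde{F}}$ and hence preserved on the limit set. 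A maximum principle then forces $u$ to attain its maximum on every backward orbit of a maximising point.
\end{enumerate}
Concluding that $u$ is constant needs the density of these backward orbits in $\Omega_{{\rm DS}}$. I would first try to extract this from the Dinh--Sibony equidistribution theorem of \cite{bs:2016}: preimages of a point of $\Omega_{{\rm DS}}$ equidistribute to $\omega_{{\rm DS}}$, whose support is all of $\Omega_{{\rm DS}}$. This is the step where the argument is most fragile, since a point of the exceptional polar set $\mathcal{E}$ might lie in $\Omega_{{\rm DS}}$ and would then need separate handling. Once constancy $c(G)$ is known for the dense class, uniform boundedness extends convergence to all continuous $G$. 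Undoing the normalisation gives $\Lambda^{-n} \mathcal{L}_{F}^{n} G = H\, \mathcal{L}_{\widetilde{F}}^{n} \left( G / H \right) \to c \left( G / H \right) H$.

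For item 4, apply item 3 to $G = g \circ \Pi_{0}$ and use the intertwining identity:
\[ \left( \Lambda^{-n} \mathcal{L}_{f}^{n} g \right) \circ \Pi_{0}\ \longrightarrow\ c \left( \frac{g}{h} \circ \Pi_{0} \right) \left( h \circ \Pi_{0} \right) \]
uniformly on $\mathscr{P}^{\Gamma}_{{\rm inv.}} \left( \Omega_{{\rm DS}} \right)$. Surjectivity of $\Pi_{0}$ turns this into uniform convergence on $\Omega_{{\rm DS}}$. Simplicity of $\Lambda$ for $\mathcal{L}_{f}$ follows the same way, since any eigenfunction $u$ of $\mathcal{L}_{f}$ lifts to the eigenfunction $u \circ \Pi_{0}$ of $\mathcal{L}_{F}$.
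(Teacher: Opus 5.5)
The paper gives no proof of this theorem. It is quoted as an amalgamation of results from \cite{ss:2025}, so there is no in-paper argument to compare against. Your outline is the standard Ruelle--Perron--Frobenius scheme, and several parts are fine as they stand: the intertwining identity (via backward invariance of $\Omega_{{\rm DS}}$), surjectivity of $\Pi_{0}$ via Theorem \ref{mlt}, item 2, and the passage from item 3 to item 4 and to simplicity.

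\textbf{The gap.} The genuine gap is the step you flag yourself: showing that every limit point $u$ of $\{\mathcal{L}^{n}_{\widetilde{F}}G\}$ is constant. As sketched, it would not close. Knowing $u(\mathfrak{Z}) = \max u$ says nothing about $u$ on the preimages of $\mathfrak{Z}$. What the maximum principle actually gives is, for each $m$, a maximiser $\mathfrak{Z}_{m}$ of $\mathcal{L}^{m}_{\widetilde{F}}u$ with $u \equiv \max u$ on $(\sigma^{\Gamma})^{-m}(\mathfrak{Z}_{m})$, and $\mathfrak{Z}_{m}$ moves with $m$. So you need two things:
\begin{enumerate}
\item density of $m$-th preimages that is \emph{uniform in the base point};
\item density in $\mathscr{P}^{\Gamma}_{{\rm inv.}}(\Omega_{{\rm DS}})$ with respect to $\Delta$, i.e.\ shadowing the first $K$ points of an arbitrary target path \emph{with its prescribed symbols}, not merely coming close to points of $\Omega_{{\rm DS}}$.
\end{enumerate}
Dinh--Sibony equidistribution on its own gives neither: it is pointwise, non-uniform, and silent on $\mathcal{E} \cap \Omega_{{\rm DS}}$.

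The same uniform density is also used silently in your item 1. Positivity of $h$ needs it, since a zero of $h$ propagates to all backward images. So does the uniform bound on your cone for Arzel\`a--Ascoli: $\int u\, \mathrm{d}\rho_{0} = 1$ only gives $u(x) \le e^{B_{1}} / \rho_{0}(B(x,1))$, which is bounded only if $\rho_{0}$ has full support on $\Omega_{{\rm DS}}$.

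\textbf{How to close it.} The missing ingredient follows from the hypotheses, with no exceptional set involved.
\begin{enumerate}
\item Given $w \in \Omega_{{\rm DS}}$ and $n$, Theorem \ref{mlt} gives an $n$-step forward path from $w$ ending at some $z' \in \Omega_{{\rm DS}}$. So $w$ is an $n$-th preimage of $z'$.
\item Definition \ref{expcorr} applies to \emph{every} pair of points of $\Omega_{{\rm DS}}$, and $\Omega_{{\rm DS}}$ is backward invariant. Applying it $n$ times to $(z', z)$ shows that every $z \in \Omega_{{\rm DS}}$ has an $n$-th preimage within $\lambda^{-n}\, \mathrm{diam}\, \widehat{\mathbb{C}}$ of $w$.
\item For the path space, take a target $(w_{0}, w_{1}, \ldots; \gamma_{1}, \gamma_{2}, \ldots)$. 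First reach a point near $w_{K}$ in $N-K$ backward steps from $z_{0}$. Then pull back $K$ further steps along $\gamma_{K}, \ldots, \gamma_{1}$, applying Definition \ref{expcorr} to $w_{r}$ paired with the current point.
\item This yields an $N$-th $\sigma^{\Gamma}$-preimage of $\mathfrak{Z}$ within $\Delta$-distance $O(\lambda^{-(N-K)} + 2^{-K})$ of the target, uniformly in $\mathfrak{Z}$. That forces $u$ to be constant, and it also gives $h > 0$ and full support of $\rho_{0}$.
\end{enumerate}

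\textbf{Smaller repairs.}
\begin{enumerate}
\item Definition \ref{expcorr} only provides \emph{a} partner for each branch. Your termwise comparison of $\mathcal{L}_{f}u(x)$ with $\mathcal{L}_{f}u(y)$, and the analogous estimate in item 3, needs a bijection of branches counted with multiplicity. You should justify this at least for $d_{\widehat{\mathbb{C}}}(x,y)$ small.
\item Equicontinuity in item 3 needs more than contraction, which is by $\max\{\lambda^{-1}, 1/2\}$, not $\min$. You also need the distortion bound $|S_{n}F(\mathfrak{X}') - S_{n}F(\mathfrak{Y}')| \le \sum_{i > k} \omega_{i}(f)$ along paired branches; the $\log H$ terms of $S_{n}\widetilde{F}$ telescope.
\item $\min u$ is non-decreasing, not non-increasing, under $\mathcal{L}_{\widetilde{F}}$.
\end{enumerate}
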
 

We now make use of Theorem \ref{rot} and all the notations introduced therein to prove Theorem \ref{rotm}. 

\begin{proof}[of Theorem \ref{rotm}] 
Given $f \in \mathcal{C}^{\alpha} \left( \Omega_{{\rm DS}}, \mathbb{R} \right)$ as in the statement of Theorem \ref{rotm}, let $F = f \circ \Pi_{0} \in  \mathcal{C} \left( \mathscr{P}^{\Gamma}_{{\rm inv.}} \left( \Omega_{{\rm DS}} \right), \mathbb{R} \right)$ and $\widetilde{F} = F - \log \left( H \circ \sigma^{\Gamma} \right) + \log H - \log \Lambda$, pertaining to the normalisation of the appropriate Ruelle operator, as mentioned in statement (2) of Theorem \ref{rot}. Now, consider the action of the adjoint of the normalised Ruelle operator, denoted by $\mathcal{L}_{\widetilde{F}}^{*}$ defined on the space of probability measures supported on $\mathscr{P}^{\Gamma}_{{\rm inv.}} \left( \Omega_{{\rm DS}} \right)$, denoted by $\mathscr{M} \left( \mathscr{P}^{\Gamma}_{{\rm inv.}} \left( \Omega_{{\rm DS}} \right) \right)$ and given by $\displaystyle{\int G \mathrm{d} \left( \mathcal{L}_{\widetilde{F}}^{*} \mu \right) = \int \mathcal{L}_{\widetilde{F}} G \mathrm{d}\mu}$ for any $G \in \mathcal{C} \left( \mathscr{P}^{\Gamma}_{{\rm inv.}} \left( \Omega_{{\rm DS}} \right), \mathbb{R} \right)$. Then, for the constant function $\mathbb{1}$, we have $\displaystyle{\int \mathbb{1} \mathrm{d} \left( \mathcal{L}_{\widetilde{F}}^{*} \mu \right) = \int \mathcal{L}_{\widetilde{F}} \mathbb{1} \mathrm{d}\mu = \int \mathbb{1} \mathrm{d}\mu = 1}$, and thereby, $\mathcal{L}_{\widetilde{F}}^{*} \mu \in \mathscr{M} \left( \mathscr{P}^{\Gamma}_{{\rm inv.}} \left( \Omega_{{\rm DS}} \right) \right)$. 

Hence, one may consider $\mathcal{L}_{\widetilde{F}}^{*} : \mathscr{M} \left( \mathscr{P}^{\Gamma}_{{\rm inv.}} \left( \Omega_{{\rm DS}} \right) \right) \longrightarrow \mathscr{M} \left( \mathscr{P}^{\Gamma}_{{\rm inv.}} \left( \Omega_{{\rm DS}} \right) \right)$ as a function that maps $\mu \longmapsto \mathcal{L}_{\widetilde{F}}^{*} \mu$. An application of the Schauder-Tynchnoff theorem, as one may find in \cite{mz:1996}, then says that $\mathcal{L}_{\widetilde{F}}^{*}$ admits a fixed point, say $\mu_{0} \in \mathscr{M} \left( \mathscr{P}^{\Gamma}_{{\rm inv.}} \left( \Omega_{{\rm DS}} \right) \right)$. Then, for any $G \in \mathcal{C} \left( \mathscr{P}^{\Gamma}_{{\rm inv.}} \left( \Omega_{{\rm DS}} \right), \mathbb{R} \right)$, we have 
\begin{equation} 
\label{eqn:cg} 
\int G \mathrm{d}\mu_{0}\ \ =\ \ \int G \mathrm{d}\left( \lim_{n\, \to\, \infty} \left( \mathcal{L}_{\widetilde{F}}^{*} \right)^{n} \mu_{0} \right)\ \ =\ \ \int \lim_{n\, \to\, \infty} \left( \mathcal{L}_{\widetilde{F}}^{n} G \right) \mathrm{d}\mu_{0}\ \ =\ \ \int c(G) \mathrm{d}\mu_{0}\ \ =\ \ c(G), 
\end{equation} 
using statement (3) of Theorem \ref{rot} and thereby, making the fixed point $\mu_{0}$ of $\mathcal{L}_{\widetilde{F}}^{*}$ unique. Moreover, 
\[ \int \left( G \circ \sigma^{\Gamma} \right) \mathrm{d}\mu_{0}\ \ =\ \ \int \left( G \circ \sigma^{\Gamma} \right) \mathrm{d}\left(\mathcal{L}_{\widetilde{F}}^{*} \mu_{0} \right)\ \ =\ \ \int \left( \mathcal{L}_{\widetilde{F}} G \circ \sigma^{\Gamma} \right) \mathrm{d}\mu_{0}\ \ =\ \ \int G \mathrm{d}\mu_{0}, \] 
ensuring the membership of $\mu_{0}$ in the space of $\sigma^{\Gamma}$-invariant measures in $\mathscr{M} \left( \mathscr{P}^{\Gamma}_{{\rm inv.}} \left( \Omega_{{\rm DS}} \right) \right)$, denoted by $\mathscr{M}_{\sigma^{\Gamma}} \left( \mathscr{P}^{\Gamma}_{{\rm inv.}} \left( \Omega_{{\rm DS}} \right) \right)$. 

Finally, combining the statements (3) and (4) of Theorem \ref{rot} and using Equation \eqref{eqn:cg}, we have for any $g \in \mathcal{C} \left( \Omega_{{\rm DS}}, \mathbb{R} \right)$, 
\[ \frac{1}{\Lambda^{n}} \mathcal{L}_{f}^{n} g\ \ \to\ \ c \left( \frac{g}{h} \circ \Pi_{0} \right) h\ \ =\ \ h \int \left( \frac{g}{h} \circ \Pi_{0} \right) \mathrm{d}\mu_{0}\ \ =\ \ h \int \frac{g}{h} \mathrm{d}\left( \left( \Pi_{0} \right)_{*}\mu_{0} \right). \] 
Defining $\nu = \left( \Pi_{0} \right)_{*}\mu_{0} \in \mathscr{S}^{\Gamma}$ completes the proof. 
\end{proof}

\end{document}